\DeclareFontFamily{U}{mathb}{\hyphenchar\font45}
\DeclareFontShape{U}{mathb}{m}{n}{
      <5> <6> <7> <8> <9> <10> gen * mathb
      <10.95> mathb10 <12> <14.4> <17.28> <20.74> <24.88> mathb12
      }{}
\DeclareSymbolFont{mathb}{U}{mathb}{m}{n}
\DeclareMathSymbol{\righttoleftarrow}{3}{mathb}{"FD}
\theoremstyle{plain}
\newtheorem{prop}{Proposition}[section]
\newtheorem{theo}[prop]{Theorem}
\newtheorem{lemm}[prop]{Lemma}
\theoremstyle{remark}
\newtheorem{rema}[prop]{Remark}
\theoremstyle{definition}
\newtheorem{exam}[prop]{Example}
\numberwithin{equation}{section}
\newcommand{\PP}{{\mathbb P}}
\newcommand{\bP}{{\mathbb P}}
\newcommand{\Q}{{\mathbb Q}}
\newcommand{\G}{{\mathbb G}}
\newcommand{\R}{{\mathbb R}}
\newcommand{\Z}{{\mathbb Z}}
\newcommand{\cB}{{\mathcal B}}
\newcommand{\cD}{{\mathcal D}}
\newcommand{\cG}{{\mathcal G}}
\newcommand{\cO}{{\mathcal O}}
\newcommand{\cM}{{\mathcal M}}
\newcommand{\cX}{{\mathcal X}}
\newcommand{\fK}{{\mathfrak K}}
\newcommand{\rH}{{\mathrm H}}
\newcommand{\rN}{{\mathrm N}}
\newcommand{\bG}{{\mathbb G}}
\newcommand{\bQ}{{\mathbb Q}}
\newcommand{\bZ}{{\mathbb Z}}
\newcommand{\fS}{{\mathfrak S}}
\newcommand{\eqto}{\stackrel{\lower1.5pt\hbox{$\scriptstyle\sim\,$}}\to}
\newcommand{\eqdashto}{\stackrel{\lower1.5pt\hbox{$\scriptstyle\sim\,$}}\dashrightarrow}
\DeclareMathOperator{\Am}{Am}
\DeclareMathOperator{\DVal}{\mathcal D Val}
\DeclareMathOperator{\Gal}{Gal}
\DeclareMathOperator{\GL}{GL}
\DeclareMathOperator{\SL}{SL}
\DeclareMathOperator{\PGL}{PGL}
\DeclareMathOperator{\Pic}{Pic}
\DeclareMathOperator{\Spec}{Spec}
\DeclareMathOperator{\Hom}{Hom}
\DeclareMathOperator{\Br}{Br}
\DeclareMathOperator{\Aut}{Aut}
\DeclareMathOperator{\Ker}{Ker}
\begin{document}
\title[Brauer group of quotients]{Unramified Brauer group of quotient spaces by finite groups}

\author{Andrew Kresch}
\address{
  Institut f\"ur Mathematik,
  Universit\"at Z\"urich,
  Winterthurerstrasse 190,
  CH-8057 Z\"urich, Switzerland
}
\author{Yuri Tschinkel}
\address{
  Courant Institute,
  251 Mercer Street,
  New York, NY 10012, USA
}

\email{tschinkel@cims.nyu.edu}

\address{Simons Foundation\\
160 Fifth Avenue\\
New York, NY 10010\\
USA}

\date{January 16, 2024}

\begin{abstract}
We provide a general algorithm for the computation of the unramified Brauer group of quotients of rational varieties by finite groups.  
\end{abstract}

\maketitle

\section{Introduction}
\label{sec.intro}
Let $V$ be a variety over an algebraically closed field $k$ of characteristic zero and $G$ a finite group acting generically freely on $V$. For example, $V$ could be a 
finite-dimensional faithful representation of $G$. The rationality problem for the field of invariants 
$$
K=k(V)^G=k(V/G)
$$ 
has attracted the attention of many mathematicians, e.g., in connection with Noether's problem (see \cite{CTS} for a survey and further references). 

One of the obstructions is the {\em unramified Brauer group} 
$$
\Br_{\mathrm{nr}}(K)\cong \Br(X)=\rH^2(X,\bG_m),
$$
which coincides with the Brauer group of a smooth projective model $X$ of $K$. 
By a result of Bogomolov \cite{Bog-linear} (see  also \cite[Thm.\ 6.1]{CTS}), this group can be computed in terms of the set 
$\cB_G$  
of \emph{bicyclic} subgroups of $G$:
\begin{equation}
\label{eqn.bicyclicnr}
\Br_{\mathrm{nr}}(k(V)^G) =\{\alpha \in \Br(k(V)^G) \, | \, \alpha_A \in \Br_{\mathrm{nr}}(k(V)^A), \, \forall A\in \cB_G\}.
\end{equation}
This yields explicit formulas in special cases.  
\begin{itemize}
\item[(1)] If $V$ is a faithful representation of $G$ then (cf.\ \cite[Thm.\ 7.1]{CTS})
$$
\Br_{\mathrm{nr}}(K)\cong \ker\Big(\rH^2(G,\bQ/\bZ)  \to \bigoplus_{A\in \mathcal B_G} \rH^2(A,\bQ/\bZ)\Big).
$$
\item[(2)] 
If $V=T$ is an algebraic torus over $k$, 
with $G$-action arising from an injective homomorphism $G\to \Aut(M)$, where $M=\mathfrak{X}^*(T)$,
then (cf.\ \cite[Thm.\ 8.7]{CTS})
$$\Br_{\mathrm{nr}}(K)\cong \ker\Big(\rH^2(G,\Q/\Z \oplus M) \to \bigoplus_{A\in \mathcal B_G} \rH^2(A, \Q/\Z \oplus M)\Big). 
$$
\item[(3)] The case
$V=\mathrm{SL}_n$ with $G\subset \mathrm{SL}_n$ acting by translations, is treated in \cite{CT-SL} and, by means of a stable equivariant birational equivalence to a linear action, leads to
the same outcome as case (1). 
\end{itemize}

After some preliminary material (Sections \ref{sect:gen} and \ref{sect:Bomu}), we highlight the role of the Brauer group of the quotient \emph{stack} 
$$[V/G]$$
(Section \ref{sect:stack})
and give a uniform treatment of some known (Section \ref{sect:basic}) and new cases ($V$ a projective space in Section \ref{sect:basic}, a Grassmannian variety in Section \ref{sect:grass}, a flag variety in Section \ref{sect:flag}). 
The main result (Section \ref{sect:destack}) is a general procedure to determine the unramified Brauer group
$\Br_{\mathrm{nr}}(k(V)^G)$ 
for a $G$-action on a rational variety $V$.


\medskip
\noindent
{\bf Acknowledgments:} 
We are grateful to Fedor Bogomolov for his interest and comments. 
The second author was partially supported by NSF grant 2301983.

\section{Generalities}
\label{sect:gen}
We work over an algebraically closed field $k$ of characteristic zero.

\subsection*{Group cohomology}
As recalled in \cite[\S 2.1]{KT-dp}, there is a natural identification
\[ \rH^i(G,k^\times)\cong \rH^i(G,\mu_\infty)\qquad (i\ge 1) \]
of group cohomology for any finite group $G$
with trivial action on $k^\times$, respectively $\mu_\infty$.
We identify $\mu_\infty$ with $\Q/\Z$ and write
$$
\rH^i(G)=\rH^i(G,\bQ/\bZ).
$$
For $i=1$ we have $\rH^1(G):=\Hom(G,\Q/\Z)$, and for $i=2$,
an interpretation of $\rH^2(G)$ in terms of central extensions of $G$; see \cite[\S IV.3]{brown}.

For any subgroup $A\subseteq G$  we denote by
$$
\mathrm{res}^i_A\colon \rH^i(G)\to \rH^i(A)
$$
the restriction homomorphism.
For a normal subgroup with $Q=G/A$,
the Hochschild-Serre spectal sequence yields the long exact sequence
\[
0\to \rH^1(Q)\to \rH^1(G)\to \rH^1(A)^Q\to \rH^2(Q)\to \ker(\mathrm{res}^2_A)\to \rH^1(Q,\rH^1(A)).
\]
This gives two split short exact sequences when $G=A\rtimes Q$.

For $G$ cyclic with generator $g$ and a $G$-module $M$ the group cohomology $\rH^i(G,M)$ can be identified with the cohomology of the complex
\[ M \stackrel{\Delta}\to M \stackrel{N}\to M \stackrel{\Delta}\to M \dots, \]
where $\Delta=g-1$ and $N=1+g+\dots+g^{n-1}$ ($n=|G|$),
cf.\ \cite[Exa.\ III.1.2]{brown}.
The case $G$ is abelian, expressed as a product of cyclic groups, may be treated via tensor product of resolutions corresponding to the factors as described in \cite[Prop.\ V.1.1]{brown}, e.g.,
for bicyclic $G\cong G_1\times G_2$ with correspnding $\Delta_i$ and $N_i$, $i=1$, $2$:
\[
M \stackrel{\big(\arraycolsep=1pt\begin{array}{cc}\scriptscriptstyle\Delta_1\\[-2pt]\scriptscriptstyle\Delta_2\end{array}\big)}\longrightarrow
M^2 \stackrel{\bigg(\arraycolsep=1pt\begin{array}{cc}\scriptscriptstyle N_1& \scriptscriptstyle 0\\[-2pt]\scriptscriptstyle -\Delta_2 & \scriptscriptstyle \Delta_1\\[-2pt]\scriptscriptstyle 0 & \scriptscriptstyle N_2\end{array}\bigg)}\longrightarrow
M^3 \dots.
\]
We see easily, this way, that $\rH^2(G)=0$ when $G$ is cyclic, and
\[ \rH^2(G_1\times G_2)\cong \Z/d\Z,\qquad d=\gcd(n_1,n_2), \]
for cyclic $G_i$ of order $n_i$ for $i=1$, $2$ (cf.\ \cite[\S 2.1]{KT-dp}).

\subsection*{Fields} Throughout, $K=k(V)$ is the function field of an algebraic variety $V$ over
$k$.
We write $\DVal_K$ for the set of 
divisorial valuations of $K$. Every $\nu\in\DVal_K$ can be realized as a valuation corresponding to a divisor on some smooth projective model of $K$. 

\subsection*{Unramified cohomology}
Let $\nu\in \DVal_K$ with residue field $\kappa$ and absolute Galois group
$\cG_\kappa$ of $\kappa$.
There is a residue homomorphism
$$
\partial_{\nu}\colon \Br(K) \to \rH^1_{\mathrm{cont}}(\cG_{\kappa})=\Hom_{\mathrm{cont}}(\cG_{\kappa},\Q/\Z) 
$$
with values in the continuous group cohomology. We have
$$
\Br_{\mathrm{nr}}(K)\subset \Br(K),\qquad \Br_{\mathrm{nr}}(K)=\bigcap_{\nu\in \DVal_K} \Ker(\partial_{\nu}),
$$
with
$\Br_{\mathrm{nr}}(K)\cong \Br(X)$ for any smooth projective model $X$ of $K$.
The group $\Br_{\mathrm{nr}}$ is invariant under purely transcendental extensions.
In particular, a rational variety $V$ has $\Br_{\mathrm{nr}}(k(V))=0$.

An important result, Fischer's theorem \cite{fischer}, asserts the rationality of $V/A$ for a linear action of an abelian group $A$.
Then $\Br_{\mathrm{nr}}(k(V)^A)=0$.


\subsection*{Basic exact sequence}

Let $V$ be a smooth projective $G$-variety over $k$.
Assume that $V$ is rational.
The Leray spectral sequence, applied to the morphism from the Deligne-Mumford stack (DM stack) $[V/G]$, associated with the $G$-action on $V$, to
the stack $BG$ of $G$-torsors, yields the long exact sequence
\begin{align}
\begin{split}
\label{eqn.BrXmodG}
&0\to 
\Hom(G,k^\times)\to \Pic(V,G)\to 
\Pic(V)^G \stackrel{\delta_2}\to  \rH^2(G,k^\times)\\
&\to \Br([V/G])\to \rH^1(G,\Pic(V))\stackrel{\delta_3}\to  \rH^3(G,k^\times)\to \rH^3([V/G],\G_m),
\end{split}
\end{align}
where $\Pic(V,G)$ denotes the group of isomorphism classes of $G$-linearized line bundles.
In \cite{KT-dp} this is used to exhibit $G$-actions on rational surfaces with obstructions to (stable) linearizability of the $G$-action, e.g., 
nonvanishing of 
\begin{itemize}
\item 
the Amitsur group 
$
\mathrm{Am}(V,G):=\mathrm{im}(\delta_2)
$
(see \cite[Sect.\ 6]{blanc2018finite}), 
\item the image $\mathrm{im}(\delta_3)$, 
\item 
the cohomology $\rH^1(G,\Pic(V))$. 
\end{itemize}

If $V$ has a $G$-fixed point, then by basic functoriality the map from $\rH^2(G,k^\times)=\Br(BG)$ to $\Br([V/G])$ is injective, thus $\delta_2=0$, and
similarly, $\delta_3=0$.

If $V$ is quasiprojective then the Leray spectral sequence leads to a basic exact sequence
with first term $\rH^1(G,\G_m(V))$ and
$\rH^i(G,k^\times)$ ($i=2$, $3$) replaced by $\rH^i(G,\G_m(V))$
and $\Br([V/G])$ by $\ker(\Br([V/G])\to \Br(V))$.

We will use the following observation, which appears in \cite{villalobospaz}.

\begin{lemm}
\label{lem.PicVW}
Suppose $V\to W$ is a $G$-equivariant morphism of smooth projective $G$-varieties, such that the induced homomorphism
\[ \Pic(W)\to \Pic(V) \]
is injective (resp., an isomorphism).
Then $\Pic(W,G)\to \Pic(V,G)$ is injective (resp., an isomorphism), and
$\mathrm{Am}(W,G)$ is contained in (resp., is equal to) $\mathrm{Am}(V,G)$.
\end{lemm}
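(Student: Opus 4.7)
The plan is to apply the basic exact sequence \eqref{eqn.BrXmodG} to both $V$ and $W$ and to compare them via the commutative ladder induced by the $G$-equivariant morphism $V\to W$ through the naturality of the Leray spectral sequences for $[V/G]\to BG$ and $[W/G]\to BG$. The key observation is that on the four \emph{outer} terms of this ladder the vertical maps are already known: they are the identity on $\Hom(G,k^\times)$ and on $\rH^2(G,k^\times)=\Br(BG)$, while the hypothesis controls the vertical map $\Pic(W)^G\to\Pic(V)^G$. What we want to deduce is the corresponding statement about the remaining term $\Pic(-,G)$, so everything reduces to an application of the four- or five-lemma.

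Concretely, I would write down the left portion of \eqref{eqn.BrXmodG} for $V$ and $W$ and assemble them into a commutative ladder with exact rows
\[
\xymatrix@C=12pt{
0 \ar[r] & \Hom(G,k^\times) \ar[r] \ar@{=}[d] & \Pic(W,G) \ar[r] \ar[d] & \Pic(W)^G \ar[r]^-{\delta_2^W} \ar[d] & \rH^2(G,k^\times) \ar@{=}[d] \\
0 \ar[r] & \Hom(G,k^\times) \ar[r] & \Pic(V,G) \ar[r] & \Pic(V)^G \ar[r]^-{\delta_2^V} & \rH^2(G,k^\times).
}
\]
For the injectivity claim I would invoke the four-lemma on the first four columns: with the identity on $\Hom(G,k^\times)$ and the assumed injectivity of $\Pic(W)^G\to\Pic(V)^G$, it follows that $\Pic(W,G)\to\Pic(V,G)$ is injective. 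For the isomorphism claim I would instead apply the five-lemma to the full five-column diagram, using now that $\Pic(W)^G\to\Pic(V)^G$ is an isomorphism and that the identity on $\rH^2(G,k^\times)$ supplies the required injectivity at the fifth position.

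The Amitsur group assertion follows from commutativity of the rightmost square alone. Since $\Am(W,G)=\im(\delta_2^W)$ agrees with the image of the composition $\Pic(W)^G\to\Pic(V)^G\stackrel{\delta_2^V}\to\rH^2(G,k^\times)$, it is contained in $\Am(V,G)=\im(\delta_2^V)$, with equality whenever $\Pic(W)^G\to\Pic(V)^G$ is surjective, in particular when it is an isomorphism.

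The only point requiring a moment's care is to verify that the morphism $V\to W$ really does induce a morphism of the two Leray spectral sequences making the ladder commute; this is standard functoriality applied to the factorization $[V/G]\to[W/G]\to BG$ and presents no genuine obstacle. Once the ladder is in place, the lemma is a routine diagram chase.
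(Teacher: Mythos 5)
Your proposal is correct and follows essentially the same route as the paper: the paper writes down exactly this commutative ladder of basic exact sequences and concludes "the result follows," while you simply make the four-/five-lemma chase and the image comparison for $\mathrm{Am}$ explicit. No issues.
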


\begin{proof}
We have the commutative diagram
\[
\xymatrix{
0\ar[r] & \Hom(G,k^\times)\ar[r]\ar@{=}[d] & \Pic(W,G)\ar[r]\ar[d] & \Pic(W)^G \ar[r]^{\delta_2}\ar[d] & \rH^2(G,k^\times)\ar@{=}[d]
\\
0\ar[r] & \Hom(G,k^\times)\ar[r] & \Pic(V,G)\ar[r] & \Pic(V)^G\ar[r]^{\delta_2} & \rH^2(G,k^\times)
}
\]
with exact rows.
The result follows.
\end{proof}

\subsection*{Linearized bundles}
Let $V$ be a smooth projective $G$-variety over $k$ and $E$ a vector bundle over $V$.
We suppose
that the projectivization $\PP(E)$ is endowed with a $G$-action, so that the projection to $V$ is $G$-equivariant, and we have a central cyclic extension
\begin{equation}
\label{eqn.ZGextn}
1\to Z\to \widetilde{G}\to G\to 1
\end{equation}
and a compatible $\widetilde{G}$-linearization of $E$,
with scalar action of $Z$.
We may suppose the latter, by replacing $Z$ and $\widetilde{G}$ by suitable quotients, to be by the identity character of $Z=\mu_\ell$, $\ell=|Z|$.
Then:
\begin{itemize}
\item A splitting of \eqref{eqn.ZGextn} leads to a $G$-linearization of $E$.
\item Generally, \eqref{eqn.ZGextn} determines a class $\gamma_E\in \rH^2(G)$, obstruction to existence of a splitting (for sufficiently divisible $\ell$).
\item We have $\gamma_{E\otimes E'}=\gamma_E+\gamma_{E'}$.
\item A line bundle $L$ with $[L]\in \Pic(V)^G$ leads to $\gamma_L=\delta_2([L])$.
\end{itemize}
%
%
If the $G$-action on $V$ is generically free and $E$ admits a $G$-linearization, then $k(E)^G$ is a purely transcendental extension of $k(V)^G$; this is known as the No-Name Lemma, see \cite[Sect.\ 4.3]{ChGR}.

\begin{exam}
\label{exa.linear}
Let $V^\circ$ be a $k$-vector space of dimension $n$ with projectivization $V=\PP(V^\circ)$, and let $G$ act on $V$.
We adopt the convention that this is a right action.
So it is given by a homomorphism
$G\to \PGL(V^{\circ\vee})$.
We have, canonically, a central cyclic extension \eqref{eqn.ZGextn} and compatible $\widetilde{G}\to \SL(V^{\circ\vee})$, with $Z=\mu_n$.
Then \eqref{eqn.ZGextn} determines an $n$-torsion class
\[ \gamma=\delta_2([\cO_V(-1)])\in \rH^2(G), \]
with
\[ \Am(V,G)=\langle \gamma\rangle. \]
For the trivial bundle
$\underline{V}^\circ$ associated with the given vector space we have the given $G$-action on the projectivization and as above a $\widetilde{G}$-linearization, thus
$\gamma_{\underline{V}^\circ}=\gamma$.
The corresponding $\widetilde{G}$-linearization of $E=\underline{V}^\circ\otimes \cO_V(1)$ has trivial $Z$-character, and we get a canonical $G$-linearization of $E$.
\end{exam}

\section{Bogomolov multiplier}
\label{sect:Bomu}
The description of $\Br_{\mathrm{nr}}(k(V)^G)$ for a faithful representation of $G$ from special case (1) of the Introduction involves a subgroup of $\rH^2(G)$, known as the \emph{Bogomolov multiplier}:
\[ \mathrm{B}_0(G):=\ker\Big(\rH^2(G,\bQ/\bZ)  \to \bigoplus_{A\in \mathcal B_G} \rH^2(A,\bQ/\bZ)\Big). \]
Here, $\mathcal{B}_G$ denotes the set of bicyclic subgroups of $G$.
In this section we recall some facts about
$\mathrm{B}_0(G)$, including its vanishing for some classes of groups $G$.
All groups $G$, $A$, etc., considered in this section, are finite.

The following facts follow from the long exact sequence coming from the Hochschild-Serre spectral sequence, recalled in Section \ref{sect:gen}:
\begin{itemize}
\item If $G\to A$ is a surjective homomorphism of abelian groups, then the induced homomorphism
$\rH^2(A)\to \rH^2(G)$ is injective.
\item If $G$ is abelian, $G=G_1\times\dots\times G_r$ with cyclic factors $G_i$, then
\[ \rH^2(G)\cong \bigoplus_{i<j} \rH^2(G_i\times G_j). \]
\end{itemize}
By the second fact, the Bogomolov multiplier of a group $G$ may be defined equivalently with direct sum over all abelian subgroups $A$ of $G$ (as in \cite{Bog-linear}).

\begin{lemm}
\label{lem.abexcy}
Assume that there is a short exact sequence of groups
\[ 1\to A\to G\to C\to 1, \]
where $A$ is abelian and $C=\langle c\rangle$ is cyclic,
and let $0\ne \alpha\in \rH^2(G)$ be given, with $\mathrm{res}_A^2(\alpha)=0$.
Then there exists an element $a\in A$, in the center of $G$, such that for any lift $b\in G$ of $c$ we have
$\mathrm{res}_{\langle a,b\rangle}^2(\alpha)\ne 0$.
In particular, $\mathrm{B}_0(G)=0$.
\end{lemm}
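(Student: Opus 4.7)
The plan is to combine the Hochschild--Serre spectral sequence for $1\to A\to G\to C\to 1$ with a Pontryagin duality between invariants and coinvariants of $C$. Since $C$ is cyclic, $\rH^2(C)=0$, so the 5-term exact sequence recalled in Section~\ref{sect:gen} yields an injection
\[
\ker(\mathrm{res}^2_A) \;\hookrightarrow\; \rH^1(C,A^\vee),\qquad A^\vee := \Hom(A,\Q/\Z),
\]
with $C$ acting on $A^\vee$ via the conjugation action on $A$. Let $\bar\alpha$ denote the nonzero image of $\alpha$ under this embedding.

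The key step is to convert $\bar\alpha$ into a character of a distinguished subgroup. Via the periodic complex for $C=\langle c\rangle$ of order $n$, we have $\rH^1(C,A^\vee) = \ker(N)/(c-1)A^\vee$ with $N=1+c+\dots+c^{n-1}$. The inclusion $(c-1)A^\vee \subseteq (A^C)^\perp$ (the annihilator of $A^C$ in $A^\vee$) is immediate, and an order count using the identity $|M^C|=|M_C|$ for finite $C$-modules $M$ turns it into an equality, giving the Pontryagin duality isomorphism $A^\vee/(c-1)A^\vee \cong (A^C)^\vee$. Composing,
\[
\rH^1(C,A^\vee) \;\hookrightarrow\; A^\vee/(c-1)A^\vee \;\cong\; (A^C)^\vee,
\]
so $\bar\alpha$ corresponds to a nonzero character of $A^C$; pick $a\in A^C$ with $\bar\alpha(a)\neq 0$. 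Since the conjugation action of $G$ on $A$ factors through $C$, one has $A^C = A\cap Z(G)$, so $a$ is automatically central in $G$.

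Next, for any lift $b$ of $c$, set $H:=\langle a,b\rangle$. Because $a$ is central, $H$ is abelian and bicyclic. One verifies $A\cap H = \langle a,b^n\rangle \subseteq A^C$ (the element $b^n$ is fixed trivially by $b$ and by every element of $A$ since $A$ is abelian with $b^n\in A$, hence $b^n\in Z(G)$), and that the map $H/(A\cap H)\to C$ is an isomorphism. Applying Hochschild--Serre to the sub-extension $1\to A\cap H\to H\to C\to 1$ and again using $\rH^2(C)=0$, one obtains
\[
\ker(\rH^2(H)\to \rH^2(A\cap H)) \;\hookrightarrow\; \rH^1(C,(A\cap H)^\vee).
\]
Naturality of Hochschild--Serre under the map of extensions induced by $H\hookrightarrow G$ identifies the image of $\mathrm{res}^2_H\alpha$ in $\rH^1(C,(A\cap H)^\vee)$ with the image of $\bar\alpha$ under the $C$-equivariant restriction of characters $A^\vee\to (A\cap H)^\vee$; since $a\in A\cap H$ and $\bar\alpha(a)\neq 0$, this image is nonzero, forcing $\mathrm{res}^2_H\alpha \neq 0$.

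The vanishing $\mathrm{B}_0(G)=0$ then follows: any $\alpha\in\mathrm{B}_0(G)$ restricts trivially to every bicyclic subgroup of $A$, so by the decomposition of $\rH^2$ of an abelian group into bicyclic contributions recalled in Section~\ref{sect:Bomu}, $\mathrm{res}^2_A\alpha = 0$; the main statement then yields a bicyclic subgroup on which $\alpha$ restricts nontrivially, whence $\alpha=0$. The main technical obstacle will be the Pontryagin duality identification with its order count; the remaining naturality considerations are formal.
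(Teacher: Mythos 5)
Your proposal is correct, and it follows the same overall skeleton as the paper's proof (five-term Hochschild--Serre sequence producing $0\ne\tilde\alpha\in\rH^1(C,A^\vee)$, then a central element $a\in A$ detected by a cocycle representative, then the bicyclic subgroup $\langle a,b\rangle$), but the key technical step is carried out differently. The paper produces the central element by choosing a representative $\tilde\chi$ whose kernel contains $\Delta^i(A)$ for $i$ minimal and extracting $\bar a$ from the subquotient $\Delta^{i-1}(A)/\Delta^i(A)$, then lifting to a central element; you instead observe that $\Delta(A^\vee)$ is precisely the annihilator of $A^C$ in $A^\vee$ and is contained in $\ker(N)$, so that $\rH^1(C,A^\vee)=\ker(N)/\Delta(A^\vee)$ injects into $A^\vee/\Delta(A^\vee)\cong (A^C)^\vee$, yielding directly a central $a\in A^C=A\cap Z(G)$ with $\tilde\chi(a)\ne 0$. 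This is cleaner: it avoids the $\Delta$-filtration entirely (and in particular the not-quite-obvious preliminary claim that a representative killing some $\Delta^i(A)$ exists, which requires an argument on the stable part $\Delta^\infty(A)$). The only thing the paper's version buys in exchange is the finer location of $a$ inside $\Delta^{i-1}(A)$, which is not used in the statement or in the later applications (Lemma \ref{lem.AsemiB} only needs centrality). Your final step --- a second application of Hochschild--Serre to $1\to A\cap H\to H\to C\to 1$ with $A\cap H\subseteq A^C$ central, so that the image of $\tilde\alpha$ in $\Hom(C,(A\cap H)^\vee)$ is detected by evaluation at $a$ --- is also the right justification for the paper's unexplained assertion that $a$ ``satisfies the desired property,'' and is worth keeping spelled out.
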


Proofs of this and similar statements make use of the long exact sequence coming from the Hochschild-Serre spectral sequence and the descriptions of group cohomology of abelian groups, given in Section \ref{sect:gen}.

\begin{proof}
The class $\alpha\in \ker(\mathrm{res}_A^2)$ determines a class $0\ne \tilde\alpha\in \rH^1(C,A^\vee)$,
where $A^\vee$ denotes $\Hom(A,\Q/\Z)$.
We employ the notation $\Delta$ and $N$ for $A$ as $C$-module, and equally well for $A^\vee$.
Under the identification of $\rH^1(C,A^\vee)\cong\ker(N)/\Delta(A^\vee)$, a representative $\tilde\chi\in A^\vee$, $N(\chi)=0$, may be chosen so that $\ker(\tilde\chi)$ contains $\Delta^i(A)$ (the image of the $i$th iterate of $\Delta$) for some positive integer $i$.
We suppose this is done, with $i$ as small as possible.
Then $\tilde{\chi}|_{\Delta^{i-1}(A)}$ does not lie in the image of the map
\[
(\Delta^i(A)/\Delta^{i+1}(A))^\vee\to (\Delta^{i-1}(A)/\Delta^i(A))^\vee
\]
induced by $\Delta$.
(The existence of $\chi\in A^\vee$ with $\Delta^{i+1}(A)\subset \ker(\chi)$
and $\Delta(\chi)|_{\Delta^{i-1}(A)}=\tilde{\chi}|_{\Delta^{i-1}(A)}$ would contradict the minimality of $i$.)
Consequently, there exists
\[ \bar a\in \ker\big(\Delta^{i-1}(A)/\Delta^i(A)\to \Delta^i(A)/\Delta^{i+1}(A)\big),\qquad \bar a\notin \ker(\tilde\chi). \]
There is then a lift $a\in \Delta^{i-1}(A)$, belonging to the center of $G$,
and this satisfies the desired property.
\end{proof}

The conclusion
$\mathrm{B}_0(G)=0$ is known \cite[Lemma 4.9]{Bog-linear}.
We use the description of the indicated bicyclic subgroups of $G$ in Lemma \ref{lem.abexcy} to give a direct proof of the next lemma, established using different methods (group homology of certain universal semidirect products) in \cite{Barge}.

\begin{lemm}
\label{lem.AsemiB}
Suppose that $G=A\rtimes B$ is a semidirect product of abelian groups $A$ and $B$, with $B$ bicyclic.
Then $\mathrm{B}_0(G)=0$.
\end{lemm}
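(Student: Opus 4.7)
Given $\alpha \in \mathrm{B}_0(G)$, I plan to extend the cocycle-level argument of Lemma \ref{lem.abexcy} from a cyclic to a bicyclic quotient. First I would make standard reductions: abelianness of $A$ together with the identification $\rH^2(A)\cong\bigoplus_{i<j}\rH^2(G_i\times G_j)$ from Section \ref{sect:Bomu} gives $\mathrm{res}_A^2(\alpha)=0$; bicyclicity of $B$ itself gives $\mathrm{res}_B^2(\alpha)=0$; and for each cyclic subgroup $C\subseteq B$, Lemma \ref{lem.abexcy} applied to $1\to A\to A\rtimes C\to C\to 1$ shows $\mathrm{B}_0(A\rtimes C)=0$, so $\alpha|_{A\rtimes C}=0$ since every bicyclic subgroup of $A\rtimes C$ is bicyclic in $G$.

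Next, the splitting $B\hookrightarrow G$ makes $\mathrm{res}_B^2$ a section of the inflation $\rH^2(B)\to\rH^2(G)$, so the Hochschild-Serre spectral sequence for $1\to A\to G\to B\to 1$ embeds $\alpha$ (which lies in $\ker(\mathrm{res}_A^2)\cap\ker(\mathrm{res}_B^2)$) as a class $\tilde\alpha\in\rH^1(B,A^\vee)$. Writing $B=\langle b_1\rangle\times\langle b_2\rangle$ and using the explicit bicyclic cochain complex of Section \ref{sect:gen}, $\tilde\alpha$ is represented by a pair $(\chi_1,\chi_2)\in(A^\vee)^2$ with $N_i\chi_i=0$ and $\Delta_1\chi_2=\Delta_2\chi_1$, modulo coboundaries $(\Delta_1\eta,\Delta_2\eta)$. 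The vanishings $\alpha|_{A\rtimes\langle b_i\rangle}=0$ for $i=1,2$ force $\chi_i=\Delta_i\phi_i$ for some $\phi_i\in A^\vee$; setting $\psi:=\phi_1-\phi_2$, the cocycle identity collapses to $\Delta_1\Delta_2\psi=0$, and $\tilde\alpha=0$ becomes equivalent to $\psi\in\ker\Delta_1+\ker\Delta_2$.

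To obtain $\psi\in\ker\Delta_1+\ker\Delta_2$ I would invoke the vanishing of $\alpha$ on bicyclic subgroups of $G$ not contained in any $A\rtimes C$ for cyclic $C$. For commuting pairs $x=(a_1,b_1^{k_1})$, $y=(a_2,b_2^{k_2})\in G$, meeting the commutation constraint $(b_2^{k_2}-1)a_1=(b_1^{k_1}-1)a_2$, a direct commutator computation in a central extension of $G$ realizing $\alpha$ translates $\alpha|_{\langle x,y\rangle}=0$, via Pontryagin duality on $A$, into the annihilation of $\psi$ on a subgroup of $A$ controlled by $(k_1,k_2)$. The main obstacle is assembling these constraints --- together with $\Delta_1\Delta_2\psi=0$ --- into the required decomposition. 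I expect to adapt the iterated $\Delta^i$-filtration argument from the proof of Lemma \ref{lem.abexcy}, now running in the two commuting directions $\Delta_1,\Delta_2$ on $A^\vee$, to extract --- assuming $\psi$ fails the decomposition --- an explicit element $a\in A$ and $b\in B$ with $a\in A^b$ yielding a bicyclic $H=\langle a,b\rangle$ on which $\alpha|_H\ne 0$, contradicting $\alpha\in\mathrm{B}_0(G)$.
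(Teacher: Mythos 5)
Your reductions are correct and in fact reproduce the first half of the paper's argument: the passage to $\tilde\alpha\in\rH^1(B,A^\vee)$, the representation by a pair $(\chi_1,\chi_2)$ in the bicyclic complex, the use of $\mathrm{B}_0(A\rtimes C_i)=0$ (via Lemma \ref{lem.abexcy}) to write $\chi_i=\Delta_i\phi_i$, and the reformulation ``$\tilde\alpha=0$ iff $\psi=\phi_1-\phi_2\in\ker\Delta_1+\ker\Delta_2$'' are all sound. But everything after that is a hope, not a proof, and it is precisely the heart of the lemma. You acknowledge this yourself (``the main obstacle is assembling these constraints''), and the specific plan is doubtful on two counts. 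First, a two-directional iterated filtration in $\Delta_1,\Delta_2$ is genuinely harder than the one-variable argument of Lemma \ref{lem.abexcy}, and you give no indication of how the commutation constraint $(b_2^{k_2}-1)a_1=(b_1^{k_1}-1)a_2$ would feed into it. Second, the detecting subgroup you aim to extract, $\langle a,b\rangle$ with $a\in A$, $b\in B$, $a\in A^b$, is too restrictive: the pairing of such a subgroup against $\alpha$ only recovers the constraints you already have from $\alpha|_{A\rtimes C}=0$ for cyclic $C\le B$ (it computes $\psi((c_1^{-k_1}-1)a)$ for $a\in\ker(c_1^{k_1}c_2^{k_2}-1)$, which is exactly the dual of the condition $\chi_b\in\Delta_b(A^\vee)$), so in general it cannot see the residual class measured by $\psi$ modulo $\ker\Delta_1+\ker\Delta_2$.

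The paper closes the gap without any two-variable filtration. Having arranged (in your notation) $\chi_2=0$, so that $\tilde\chi:=\chi_1=\Delta_1\psi$ satisfies $\Delta_2\tilde\chi=0$, it observes that $\tilde\chi$ descends to $(A/\Delta_2(A))^\vee$ and hence $\alpha$ is the pullback of a class $\beta\in\rH^2\big((A/\Delta_2(A))\rtimes C_1\big)$ killed by restriction to $A/\Delta_2(A)$; note that $\beta\ne 0$ is exactly your condition $\psi\notin\ker\Delta_1+\ker\Delta_2$. Now the already-proved one-variable Lemma \ref{lem.abexcy} applies to $\beta$ and produces a central $\bar a\in A/\Delta_2(A)$ such that $\beta$ is nonzero on $\langle\bar a,\text{any lift of }c_1\rangle$. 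The only remaining work is to lift this to a bicyclic subgroup of $G$: choosing $a\in A$ over $\bar a$, centrality gives $\Delta_1(a)=\Delta_2(b)$ for some $b\in A$, which is precisely the condition for $(a,c_2)$ and $(b,c_1)$ to commute in $A\rtimes B$; the subgroup they generate surjects onto a member of $\mathcal{B}_{\bar a}$, and injectivity of $\rH^2$ under surjections of abelian groups finishes the argument. Note that neither generator lies in $A$ or in $B$, which is why your anticipated special form cannot work in general. If you want to salvage your approach, replace the final paragraph by this quotient-and-lift step.
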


\begin{proof}
Suppose $0\ne \alpha\in \rH^2(G)$ with $\mathrm{res}_A^2(\alpha)=0=\mathrm{res}_B^2(\alpha)$.
Then the class $\tilde\alpha\in \rH^1(B,A^\vee)$, determined by $\alpha$, is nonzero.

We represent $B$ as a product of a pair of cyclic subgroups and employ corresponding notation $\Delta_1$, $N_1$, $\Delta_2$, $N_2$.
Then $\tilde\alpha$ may be represented by
\[ (\tilde\chi,\tilde\chi')\in A^\vee\times A^\vee, \]
satisfying $N_1(\tilde\chi)=0=N_2(\tilde\chi')$ and $\Delta_2(\tilde\chi)=\Delta_1(\tilde\chi')$.
This is unique up to coboundaries of the form
$(\Delta_1(\chi),\Delta_2(\chi))$ for $\chi\in A^\vee$.

The product representation $B=C_1\times C_2$ determines subgroups $G_i=A\rtimes C_i$ ($i=1$, $2$) of $G$.
If $\mathrm{res}_{G_2}^2(\alpha)\ne 0$, then Lemma \ref{lem.abexcy} supplies a bicyclic subgroup $\langle a,b\rangle$ of $G_2$ with $\mathrm{res}_{\langle a,b\rangle}^2(\alpha)\ne 0$, so we suppose, instead,
$\mathrm{res}_{G_2}^2(\alpha)=0$.
Then $\tilde\chi'=\Delta_2(\chi')$, for some $\chi'\in A^\vee$, and, modifying the cocycle representative by a coboundary, we are reduced to the case
\[ \tilde\chi'=0. \]
So $\Delta_2(\tilde\chi)=0$, i.e., $\tilde\chi\in (A/\Delta_2(A))^\vee$,
and $\tilde\chi$ determines
\[ \beta\in \ker\big(
\rH^2(A/\Delta_2(A)\rtimes C_1)\to \rH^2(A/\Delta_2(A)\big), \]
mapping to $\alpha\in \rH^2(G)$.

We apply Lemma \ref{lem.abexcy} to $\beta$ to obtain $\bar a\in A/\Delta_2(A)$ in the center of $A/\Delta_2(A)\rtimes C_1$ and a set $\mathcal{B}_{\bar a}$ of bicyclic subgroups, to which $\beta$ restricts nontrivially.
Let $a$ be a lift to $A$.
Then
$\Delta_1(a)=\Delta_2(b)$ for some $b\in A$.
Now the elements of $G$, obtained by pairing $a$ with chosen generator of $C_2$, and $b$ with chosen generator of $C_1$, generate an abelian subgroup of $G$ whose image in $A/\Delta_2(A)\rtimes C_1$ is in $\mathcal{B}_{\bar a}$.
This concludes the proof.
\end{proof}

\begin{lemm}
\label{lem.centralofbicyclic}
Suppose that $G$ is a central extension of a bicyclic group.
Then $\mathrm{B}_0(G)=0$.
\end{lemm}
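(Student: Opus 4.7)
The plan is to reduce to Lemma~\ref{lem.abexcy}, which already asserts $\mathrm{B}_0=0$ for a group that fits into a short exact sequence $1\to A\to G\to C\to 1$ with $A$ abelian and $C$ cyclic. So the whole task is to exhibit such a presentation of $G$.

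Write the given central extension as
\[ 1\to Z\to G\to B\to 1, \]
with $Z$ central in $G$ and $B$ bicyclic, so $B=C_1\times C_2$ with $C_i$ cyclic (one factor possibly trivial). Let $G_1\subseteq G$ be the preimage of $C_1\times\{1\}$. First I would check that $G_1$ is normal in $G$: this is immediate because $C_1\times\{1\}$ is normal in $B$. Next I would verify that $G_1$ is abelian. Choose a lift $\tilde c_1\in G_1$ of a generator of $C_1$; then $G_1=Z\cdot\langle\tilde c_1\rangle$, and since $Z$ is central in $G$ (hence in $G_1$), the generators of $G_1$ commute, so $G_1$ is abelian. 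Finally, $G/G_1\cong C_2$ is cyclic.

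Thus we have the short exact sequence
\[ 1\to G_1\to G\to G/G_1\to 1 \]
with $G_1$ abelian and $G/G_1$ cyclic, and Lemma~\ref{lem.abexcy} applies to give $\mathrm{B}_0(G)=0$.

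There is no real obstacle here; the content is recognizing that any central extension of a bicyclic group can be sliced along one cyclic factor to produce an abelian normal subgroup with cyclic quotient, at which point Lemma~\ref{lem.abexcy} does the work. The only minor point to keep in mind is the degenerate case where $B$ is itself cyclic (i.e.\ one of the $C_i$ is trivial), but then $G$ is already abelian and $\mathrm{B}_0(G)=0$ by the second bulleted fact at the start of Section~\ref{sect:Bomu}.
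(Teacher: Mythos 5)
Your proof is correct, and it takes a genuinely different and shorter route than the paper's. You observe that a central extension $1\to Z\to G\to C_1\times C_2\to 1$ is automatically abelian-by-cyclic: the preimage $G_1$ of $C_1$ is normal, is abelian because it is generated by the central subgroup $Z$ together with a single lift of a generator of $C_1$ (equivalently, $G_1/Z$ cyclic with $Z$ central forces $G_1$ abelian), and $G/G_1\cong C_2$ is cyclic; then the unconditional conclusion $\mathrm{B}_0=0$ of Lemma~\ref{lem.abexcy}, which is stated for arbitrary (not necessarily central) abelian-by-cyclic extensions, finishes the argument, with the degenerate case of cyclic $B$ absorbed harmlessly. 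The paper instead argues cohomologically: it splits into the case where the class $\tilde\alpha\in\Hom(B,A^\vee)$ attached to $\alpha$ is nonzero (reducing to the preimage of a cyclic subgroup of $B$ and invoking Lemma~\ref{lem.abexcy} there) and the case $\tilde\alpha=0$, where $\alpha$ is inflated from $\rH^2(B)$ and a transgression computation produces an abelian preimage $G'$ of the subgroup $eB$ on which $\alpha$ survives. The paper's version yields somewhat more explicit information about which abelian subgroup detects a given class (via the transgression image), but since the rest of the paper only uses the bare vanishing $\mathrm{B}_0(\widetilde{A})=0$ (e.g.\ in the proof of Theorem~\ref{thm.proj}), your reduction suffices and is the more economical argument.
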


\begin{proof}
We write a central exact sequence of groups
\[ 1\to A\to G\to B\to 1, \]
with $B$ bicyclic.
The proof will use the easy observation that $G$ is abelian if and only if $\rH^1(G)$ maps surjectively to $A^\vee$ (cf.\ the long exact sequence coming from the Hochschild-Serre spectral sequence).

Let a given $0\ne \alpha\in \rH^2(G)$, with $\mathrm{res}^2_A(\alpha)=0$, determine a class $\tilde\alpha\in \rH^1(B,A^\vee)=\Hom(B,A^\vee)$.
If $\tilde\alpha\ne 0$, then $\alpha$ remains nonzero upon restriction to the pre-image in $G$ of a suitable cyclic subgroup of $B$, and we may conclude by Lemma \ref{lem.abexcy}.
We suppose $\tilde \alpha=0$, thus $\alpha\in \rH^2(G)$ is the image under 
\[ \rH^2(B)\to \rH^2(G) \]
of some $\alpha_0\in \rH^2(B)$.
We write $B=C_1\times C_2$, cyclic subgroups of orders $|C_1|=n_1$ and $|c_2|=n_2$, so
$\rH^2(B)\cong \Z/d\Z$ with $d=\gcd(n_1,n_2)$.

Let $e$ denote the order of the image of $A^\vee\to \rH^2(B)$ (the transgression map, coming from the Hochschild-Serre spectral sequence) and $f$ the order of $\alpha_0\in \rH^2(B)$.
We have $f\nmid e$, since $\alpha\ne 0$.
Restriction from $B$ to the subgroup $eB$ leads to the class $0\ne \bar \alpha_0\in \rH^2(eB)\cong \Z/(d/e)\Z$.
Letting $G'$ denote the pre-image of $eB$ in $G$, the corresponding Hochschild-Serre spectral sequence gives a trivial transgression map, hence surjective $\rH^1(G')\to A^\vee$.
Therefore $G'$ is abelian, and $\mathrm{res}^2_{G'}(\alpha)\ne 0$.
\end{proof}

\begin{rema}
\label{rem.sharpness}
Lemmas \ref{lem.abexcy} through \ref{lem.centralofbicyclic} are somewhat sharp.
There exist groups $G$, extensions by abelian groups of bicyclic groups with $\mathrm{B}_0(G)\ne 0$;
an example is given in \cite[Sect.\ 4]{Bog-linear}.
For $p$ prime, \cite[Sect.\ 5]{Bog-linear} investigates and exhibits $p$-groups $G$ with $[G,[G,G]]=0$ and $\mathrm{B}_0(G)\ne 0$;
subject to a minimality condition it is shown that $G/[G,G]\cong (\Z/p\Z)^{2m}$, $m\ge 2$.
\end{rema}

\section{Brauer group of the quotient stack}
\label{sect:stack}

In \cite{KT-dp}, we explained the computation of $\Br([V/G])$ in case $V$ is a rational surface. 
Now, $V$ is a smooth projective rational variety of arbitrary dimension, and we give a description of $\Br([V/G])$ as a subgroup of
\begin{equation}
\label{eqn.H2GkV}
\rH^2(G,k(V)^\times)\cong \ker\big(\Br(k(V)^G)\to \Br(k(V))\big).
\end{equation}

We refer to the basic exact sequence of Section \ref{sect:gen}.
A subgroup, isomorphic to $\rH^2(G,k^\times)/\mathrm{Am}(V,G)$, gives rise directly, via $k^\times\hookrightarrow k(V)^\times$, to elements of $\rH^2(G,k(V)^\times)$.
To complete the description, we need to explain how to lift elements of $\ker(\delta_3)$ to the group \eqref{eqn.H2GkV}.
For this, we
take a $G$-invariant collection of divisors $D_i$, generating $\Pic(V)$,
introduce the exact sequences of $G$-modules
\[ 0\to R\to \bigoplus_{i} \Z\cdot [D_i]\to \Pic(V)\to 0 \]
and, with complement $U$ in $V$ of $D=\bigcup_{i}D_i$ and corresponding exact sequence
\[ 0\to k^\times\to \G_m(U)\to R\to 0 \]
of $G$-modules,
consider the diagram (see \cite[Sect.\ 6]{KT-effect}):
\[
\xymatrix@C=12pt@R=15pt{
&&
\rH^2(G,\bG_m(U))\ar[d] & \\
0 \ar[r] &
\rH^1(G,\Pic(V)) \ar[r]\ar[dr]_{\delta_3} &
\rH^2(G,R) \ar[r] \ar[d] &
\rH^2(G,\bigoplus_{i} \Z\cdot[D_i]) \\
&&\rH^3(G,k^\times)&
}
\]
Given an element of $\ker(\delta_3)$, its image in $\rH^2(G,R)$ may be lifted to $\rH^2(G,\bG_m(U))$.
We obtain a representative in $\rH^2(G,k(V)^\times)$ of a corresponding Brauer class on $[V/G]$.

We also recall the formulation of purity.
Here, $V$ need not be projective or rational, but we suppose that $G$ acts generically freely on $V$.
An element $\alpha\in \Br(k(V)^G)$ comes from $\Br([V/G])$ if and only if it has vanishing residue along the divisors of $[V/G]$ \cite[Prop.\ 2.2]{bssurf}.
The residues along divisors of $[V/G]$ are related to the classical residues (Section \ref{sect:gen}) as follows.
We fix an irreducible divisor on $[V/G]$, corresponding to a $G$-orbit $D=D_1\cup\dots\cup D_m$ of components on $V$, and suppose that each $D_i$ has generic stabilizer of order $n$.
Then \cite[Lemma 4.1]{KT-dp} the residue of $\alpha$ along the divisor $[D/G]$ of $[V/G]$ is equal to $n\delta_\nu(\alpha)$, where $\nu\in \DVal_{k(V)^G}$ is the associated divisorial valuation of the function field $k(V)^G$ of $V/G$.

For $G$ acting generically freely on smooth projective rational $V$ we have inclusions
\[ \Br_{\mathrm{nr}}(k(V)^G)\subset \Br([V/G])\subset \Br(k(V)^G). \]
Indeed, the defining conditions for $\Br_{\mathrm{nr}}(k(V)^G)$ are vanishing $\delta_\nu$ for all $\nu\in \DVal_{k(V)^G}$, while for the purity characterization of $\Br([V/G])$ only the $\nu$ associated with divisors on $[V/G]$ are involved, and then only the vanishing of $n_\nu\delta_\nu$ is required, for some positive integer $n_\nu$.
Since $\Br([V/G])$ is contained in the kernel of $\Br(k(V)^G)\to \Br(k(V))$, using \eqref{eqn.H2GkV} we have
\begin{equation}
\label{eqn.containments}
\Br_{\mathrm{nr}}(k(V)^G)\subset \Br([V/G])\subset \rH^2(G,k(V)^\times). \end{equation}

\begin{lemm}
\label{lem.fixedpoint}
Let $A$ be an abelian group, acting generically freely on a smooth projective variety $V$,
and let $\alpha\in \Br([V/A])$.
For $v\in V^A$ we denote by $$
i_v^*\colon \Br([V/A])\to \rH^2(A,k^\times)
$$ 
the corresponding splitting in the basic exact sequence.
If $\alpha\in \Br_{\mathrm{nr}}(k(V)^A)$,
then $i_v^*(\alpha)=0$, for all $v\in V^A$.
\end{lemm}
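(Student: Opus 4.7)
The plan is to reduce, via equivariant formal linearization at the smooth fixed point $v$, to a computation on the linear $A$-representation $L:=T_vV$, where Fischer's theorem applies.

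First, I would identify $i_v^*$ via a factorization through the linear model. Since $A$ is finite abelian, $\chara(k)=0$, and $v$ is a smooth fixed point, there is an $A$-equivariant isomorphism of complete local rings $\widehat{\cO}_{V,v}\cong k[[L^\vee]]$. The basic exact sequence applied to each of $[\widehat V_v/A]$, $[\widehat L_0/A]$, and $[L/A]$ collapses to an isomorphism with $\rH^2(A,k^\times)$ via closed-point restriction: in each case $\Pic$ is trivial (local or affine base), and the units decompose as $k^\times\cdot(1+\widehat{\mathfrak m})$ with the pro-unipotent factor $A$-acyclic in characteristic zero (via $\log$ to the $\Q$-vector space $\widehat{\mathfrak m}$). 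Thus
\[
i_v^*\colon \Br([V/A])\to \Br([\widehat V_v/A])\xrightarrow{\ \sim\ }\Br([L/A])\xrightarrow[\ \sim\ ]{i_0^*}\rH^2(A,k^\times),
\]
and I write $\widetilde\beta\in \Br([L/A])$ for the class corresponding to $i_v^*(\alpha)$.

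Second, I would transfer the unramifiedness of $\alpha$ to $\widetilde\beta$. The formal isomorphism matches, $A$-equivariantly, analytic branches of divisors of $V$ through $v$ with those of divisors of $L$ through $0$. Using the residue formula of Section~\ref{sect:stack} (stack residue equals $n_\nu$ times classical residue) together with the fact that classical residues depend only on the completion at the divisorial valuation, the residues of $\alpha$ and of $\widetilde\beta$ at corresponding branches agree. Since $\alpha\in\Br_{\mathrm{nr}}(k(V)^A)$ has vanishing residues at every divisorial valuation of $k(V)^A$---in particular at those coming from divisors through $v$---the class $\widetilde\beta$ has vanishing residues at every divisorial valuation of $k(L)^A$ arising from divisors of $L$ through $0$.

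Third, I would conclude via Fischer plus Bogomolov. By Fischer's theorem, $k(L)^A$ is purely transcendental over $k$; combined with Bogomolov's formula, $\Br_{\mathrm{nr}}(k(L)^A)=\mathrm{B}_0(A)$, and the ramification of a class from $\rH^2(A,k^\times)\hookrightarrow \Br(k(L)^A)$ (detected via restriction to bicyclic subgroups) occurs at divisorial valuations associated to pseudo-reflection hyperplanes of bicyclic subgroups on $L$, all of which are linear subspaces through $0$. The previous step thus forces $\widetilde\beta\in\mathrm{B}_0(A)$, and since $A$ is abelian $\mathrm{B}_0(A)=0$ (Section~\ref{sect:Bomu}), so $i_v^*(\alpha)=0$. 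The hard part will be the residue comparison in the second paragraph: divisorial valuations of $k(V)^A$ live on a smooth projective model of $V/A$ rather than on $V$ itself, and the transfer to valuations of $k(L)^A$ through the formal isomorphism requires care, likely via an $A$-equivariant local resolution together with the compatibility of residues of Brauer classes with formal completion of the local field at the valuation.
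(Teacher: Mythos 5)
Your reduction to the linear model $L=T_vV$ is a reasonable opening, but the argument breaks in the passage from ``vanishing residues along divisors of $L$ through $0$'' to ``$\widetilde\beta\in\Br_{\mathrm{nr}}(k(L)^A)$.'' The claim in your third step --- that a constant class $\widetilde\beta\in\rH^2(A,k^\times)\subset\Br(k(L)^A)$ can only ramify along divisorial valuations given by (pseudo-reflection) hyperplanes of $L$ through the origin --- is false. The paper's own Example \ref{exam:running} is a counterexample: for $\fK_4$ acting by the regular representation, no nontrivial element fixes a hyperplane and the action is free outside codimension $2$, so \emph{no} divisor of $L$ (through $0$ or otherwise) carries a residue; yet the nonzero class in $\rH^2(\fK_4,k^\times)$ is ramified, with the ramification appearing only on exceptional divisors of blow-ups. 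So your step 2 checks residues on too small a set of valuations, and step 3's justification for why that set suffices does not hold; as written the argument would ``prove'' that every class in $\rH^2(A,k^\times)$ is unramified on $k(L)^A$, contradicting $\mathrm{B}_0(A)=0$ together with $\rH^2(A)\neq 0$ for bicyclic $A$. A repair would need either to match \emph{all} divisorial valuations of $k(L)^A$ centered over the origin with valuations of $k(V)^A$ centered over $v$ (the delicate formal-completion bookkeeping you flag yourself), or to use the $\G_m$-homogeneity of $\widetilde\beta$ to push an arbitrary ramified valuation to one centered over the origin; neither is carried out.

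The paper avoids all of this with a short geometric argument: after replacing $V$ by $V\times\PP^1$ so that $V^A$ has no isolated points, blow up $v$; the exceptional divisor $E$ carries a faithful action lifting to a linear one, so $\Br_{\mathrm{nr}}(k(E)^A)=0$ by Fischer's theorem, the unramified class $\alpha$ restricts to $0$ in $\Br([E/A])$, and $i_v^*$ factors through $i_w^*$ for a fixed point $w\in E^A$. The key difference is that restricting to the exceptional divisor replaces the problematic residue bookkeeping by a single application of the stable birational invariance of $\Br_{\mathrm{nr}}$ on a projective space, where Fischer's theorem does all the work. I recommend adopting that route, or, if you want to keep the formal linearization, supplying the missing homogeneity/valuation-matching argument explicitly.
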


\begin{proof}
Replacing $V$ by $V\times \PP^1$ if needed (with trivial $A$-action on $\PP^1$), we may suppose that $V^A$ has no isolated points.
Let $v\in V^A$.
We blow up the point $v$ to obtain $\widetilde{V}$ and note that $A$ has a faithful linear action on the exceptional divisor $E$.
By Fischer's theorem,
$\Br_{\mathrm{nr}}(k(E)^A)=0$, thus $\alpha$ restricts to $0\in \Br([E/A])$.
We
conclude by functoriality.
\end{proof}

\begin{exam} 
\label{exam:running}
We consider the action from \cite[Rem.\ 4.3]{KT-dp}, the projectivization of the regular representation of the Klein $4$-group $\fK_4$, and determine $\Br([\PP^3/\fK_4])$.
The action has fixed points, so $\delta_2$ is trivial.
We have $\rH^2(\fK_4,k^\times)\cong \Z/2\Z$ and $\rH^1(G,\Pic(\PP^3))=0$,
so
\[ \Br([\PP^3/\fK_4])\cong \Z/2\Z. \]
The generator $\alpha$ is not in $\Br_{\mathrm{nr}}(K)=0$,
$K=k(\PP^3)^{\fK_r}$,
so there exists $\nu\in \DVal_K$ with $\partial_\nu(\alpha)\ne 0$.
Since the $\fK_4$-action is free outside a subset of codimension $2$, we have to blow up $\PP^3$ to find a divisor giving such a $\nu$.
See Section \ref{sect:destack} for a systematic approach to testing for ramification.
\end{exam}

\section{Basic cases}
\label{sect:basic}
Our formalism permits a uniform treatment of several cases.

\subsection*{Linear actions}
The main result of Bogomolov \cite{Bog-linear} tells us that for a faithful linear representation $V^\circ$ of a finite group $G$, the field of invariants $K=k(V^\circ)^G$ has unramified Brauer group
\begin{equation}
\label{eqn:ind}
\Br_{\mathrm{nr}}(K)\cong \mathrm{B}_0(G).
\end{equation}

We apply our formalism to the standard equivariant compactification $V=\bP(1\oplus V^\circ)$ of $V^\circ$.
The $G$-action on $V$ has a fixed point, thus $\delta_2=0$. Moreover, $\rH^1(G,\Pic(V))=0$. It follows that
$\Br([V/G])$ is identified with
$\rH^2(G,k^\times)$, which we have already identified with
$\rH^2(G)=\rH^2(G,\Q/\Z)$.
The middle term in the chain of inclusions \eqref{eqn.containments} is
\[
\Br([V/G])\cong \rH^2(G).
\]
Here, subgroups of each side are identified by Bogomolov's result \eqref{eqn:ind}.

For the containment $\Br_{\mathrm{nr}}(K)\subset \mathrm{B}_0(G)$ we use
Fischer's theorem (Section \ref{sect:gen}).
If $\alpha\in \Br_{\mathrm{nr}}(K)$,
then
$\alpha_A\in \Br_{\mathrm{nr}}(k(V)^A)=0$
for $A\in \mathcal{B}_G$.
Thus the class in $\rH^2(G)$,
corresponding to $\alpha$, lies in $\ker(\mathrm{res}_A)$.

For the reverse containment we use the
equality \eqref{eqn.bicyclicnr}, recalled in the Introduction.
Suppose $\alpha\in \Br([V/G])$ corresponds to a class in $\mathrm{B}_0(G)$.
Then $\alpha_A=0$ for $A\in \mathcal{B}_G$.
So $\alpha_A\in \Br_{\mathrm{nr}}(k(V)^A)$, thus
$\alpha\in \Br_{\mathrm{nr}}(K)$.

\subsection*{Projectively linear actions}
Now we consider an action of $G$ on a projective space $V=\bP(V^\circ)$.
This arises from a representation $V^\circ$ of a cyclic extension $\widetilde{G}$ of $G$. As for linear actions we have $\rH^1(G,\Pic(V))=0$.
From Example \ref{exa.linear} we have $\gamma\in \rH^2(G)$, with 
$\mathrm{Am}(V,G)=\langle \gamma\rangle$.
We have
\[
\Br([V/G])\cong \rH^2(G)/\langle \gamma \rangle.
\]

\begin{theo}
\label{thm.proj}
For a faithful action of a finite group $G$ on a projective space $V$, corresponding to a faithful linear representation of a central cyclic extension $\widetilde{G}$ of $G$ with associated class $\gamma\in \rH^2(G)$,
we have
\[
\Br_{\mathrm{nr}}(k(V)^G)\cong\ker\Big(\rH^2(G)/\langle\gamma\rangle
\to \bigoplus_{A\in \mathcal{B}_G}\rH^2(A)/\langle \mathrm{res}^2_A(\gamma)\rangle\Big).
\]
\end{theo}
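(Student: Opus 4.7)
The plan is to parallel the treatment of linear actions just given, accounting for the Amitsur twist $\gamma$.

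First, I would identify both the ambient stacky Brauer group and its restrictions to bicyclic subgroups. Since $\Pic(V)=\Z$ with trivial $G$-action, $\rH^1(G,\Pic(V))=\Hom(G,\Z)=0$, and the basic exact sequence together with Example~\ref{exa.linear} yields $\Br([V/G])\cong \rH^2(G)/\langle\gamma\rangle$. The same argument, applied to any subgroup $A\subseteq G$ with preimage $\widetilde A\subseteq \widetilde G$ (a central cyclic extension of $A$ of class $\mathrm{res}^2_A(\gamma)$), gives $\Br([V/A])\cong \rH^2(A)/\langle\mathrm{res}^2_A(\gamma)\rangle$, with the pullback $\Br([V/G])\to \Br([V/A])$ induced by $\mathrm{res}^2_A$.

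Next, I would combine the inclusion $\Br_{\mathrm{nr}}(k(V)^G)\subset \Br([V/G])$ from \eqref{eqn.containments} with Bogomolov's criterion \eqref{eqn.bicyclicnr}: an element $\alpha\in \Br([V/G])$ lies in $\Br_{\mathrm{nr}}(k(V)^G)$ if and only if $\alpha_A\in \Br_{\mathrm{nr}}(k(V)^A)$ for every $A\in \mathcal{B}_G$. The crux of the argument is the vanishing
\[ \Br_{\mathrm{nr}}(k(V)^A)=0 \qquad\text{for every } A\in \mathcal{B}_G. \]
Once this is known, the condition reduces to $\alpha_A=0$ in $\Br(k(V)^A)$; using the embedding $\Br([V/A])\hookrightarrow \Br(k(V)^A)$ from \eqref{eqn.H2GkV}, this is in turn equivalent to vanishing of $\alpha_A$ in $\rH^2(A)/\langle\mathrm{res}^2_A(\gamma)\rangle$, so the unramified subgroup cuts out exactly the kernel described in the statement.

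The key vanishing is where the projective twist really matters and will be the main obstacle. The preimage $\widetilde A\subseteq \widetilde G$ is a central extension of the bicyclic group $A$ by the cyclic group $Z$, so Lemma~\ref{lem.centralofbicyclic} gives $\mathrm{B}_0(\widetilde A)=0$. Faithfulness of $V^\circ$ as a $\widetilde G$-module restricts to faithfulness as a $\widetilde A$-module, and Bogomolov's theorem \eqref{eqn:ind} then yields $\Br_{\mathrm{nr}}(k(V^\circ)^{\widetilde A})=0$. To descend to $k(V)^A$, I would use that the $\G_m$-torsor $V^\circ\setminus\{0\}\to V$ is $\widetilde A$-equivariant with the central $Z\subset \widetilde A$ acting through the scalar inclusion $Z\hookrightarrow \G_m$; the quotient $(V^\circ\setminus\{0\})/\widetilde A\to V/A$ is therefore a $\G_m$-bundle and $k(V^\circ)^{\widetilde A}$ is purely transcendental of degree one over $k(V)^A$. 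Invariance of $\Br_{\mathrm{nr}}$ under purely transcendental extensions then delivers the required vanishing, completing the proof.
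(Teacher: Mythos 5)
Your proof is correct and follows essentially the same route as the paper: in both, the crux is that the preimage $\widetilde A\subseteq\widetilde G$ of a bicyclic $A$ is a central extension of a bicyclic group, so Lemma~\ref{lem.centralofbicyclic} and Bogomolov's theorem give $\Br_{\mathrm{nr}}(k(V^\circ)^{\widetilde A})=0$, which descends to $\Br_{\mathrm{nr}}(k(V)^A)=0$, after which \eqref{eqn.bicyclicnr} and the identification $\Br([V/A])\cong \rH^2(A)/\langle\mathrm{res}^2_A(\gamma)\rangle$ finish the argument. The only cosmetic difference is the descent step: you use the $\widetilde A$-equivariant $\G_m$-torsor $V^\circ\setminus\{0\}\to V$ (with Hilbert 90 implicitly giving generic triviality of the quotient), whereas the paper runs the equivalent chain $k(V)^A\subset k(\cO_V(-\ell))^A\cong k(\cO_V(-1))^{\widetilde A}\cong k(V^\circ)^{\widetilde A}$ via the No-Name Lemma.
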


\begin{proof}
For the forwards containment,
let $A\in \mathcal{B}_G$.
We form the extension $\widetilde{A}$ of $A$ by restricting the extension $\widetilde{G}$ of $G$ and obtain
$\mathrm{B}_0(\widetilde{A})=0$ from
Lemma \ref{lem.centralofbicyclic}.
Bogomolov's result yields
\[ \Br_{\mathrm{nr}}(k(V^\circ)^{\widetilde{A}})=0, \]
and this gives us what we need, since
(with $\ell=|Z|$ in the extension \eqref{eqn.ZGextn})
\[ \Br_{\mathrm{nr}}(k(V)^A)
\cong
\Br_{\mathrm{nr}}(k(\cO_V(-\ell))^A)
\cong
\Br_{\mathrm{nr}}(k(\cO_V(-1))^{\widetilde{A}})
\cong
\Br_{\mathrm{nr}}(k(V^\circ)^{\widetilde{A}}) \]
by the stable birational invariance of the unramified Brauer group and the No-Name Lemma (see Section \ref{sect:gen}).
The reverse containment is proved as for linear actions.
\end{proof}

\subsection*{Toric actions}
Finally, we consider the $G$-action on the torus $T=\G_m^d$ given by an injective homomorphism
$$
G\hookrightarrow \GL_d(\bZ)=\GL(M),
$$
where $M=\mathfrak{X}^*(T)$ is the character lattice, and
$K=k(T)^G$.

As equivariant compactification we
take $V$ to be a smooth projective toric variety,
given by the combinatorial data of a $G$-invariant smooth projective fan of cones in $N\otimes_\Z\R$, where $N=\mathfrak{X}_*(T)$ is the cocharacter lattice.
(This exists in general; see \cite{CTHS}.)

We use a variant of \eqref{eqn.containments}, involving $\Br([T/G])$:
\[
\Br_{\mathrm{nr}}(K)\subset \ker\big(\Br([T/G])\to \Br(T)\big)\subset \rH^2(G,k(T)^\times).
\]
The middle group is accessible by the basic exact sequence of Section \ref{sect:gen}, applied to $T$.
Using the splitting given by the fixed point $1_T$ and the vanishing of $\Pic(T)$, we obtain
\[ \ker\big(\Br([T/G])\to \Br(T)\big)\cong \rH^2(G,\Q/\Z\oplus M). \]

According to Saltman \cite[Thm.\ 12]{saltmanbrauer},
the unramified Brauer group is
\[ 
\Br_{\mathrm{nr}}(K)
\cong \ker\big(\rH^2(G,\Q/\Z\oplus M)\to \bigoplus_{A\in \mathcal{B}_G}\rH^2(A,\Q/\Z\oplus M)\big).
\]
As in the other cases, the forwards containment is implied by the vanishing of $\Br_{\mathrm{nr}}(k(T)^A)$ for $A\in \mathcal{B}_G$, and the reverse containment holds by \eqref{eqn.bicyclicnr}.
So Saltman's result follows from the vanishing of $\Br_{\mathrm{nr}}(k(T)^A)$ for $A\in \mathcal{B}_G$, which we explain now, following Barge \cite{Barge}.

There is a $G$-module $M'$ with $M\oplus M'$ of finite index in a permutation module $P$
(e.g., span of boundary divisors of $V$ in the exact sequence $0\to M\to P \to \Pic(V)\to 0$,
with $M'\subset P$ giving an isomorphism
$M'\otimes \Q\to\Pic(V)\otimes \Q$).
With associated tori $T_P=\Spec(k[P])$, etc., we have $T_M=T$
and epimorphism $T_P\to T\times T_{M'}$
with finite kernel $F\subset T_P$.
On $T_P$ the translation action of $F$ and permutation action of $G$ are linear and, together, yield a semidirect product $F\rtimes G$.
For $A\in \mathcal{B}_G$ we have
\begin{equation}
\label{eqn.FAvanish}
\Br_{\mathrm{nr}}(k(T\times T_M)^A)\cong \Br_{\mathrm{nr}}(k(T_P)^{F\rtimes A})=0
\end{equation}
by Lemma \ref{lem.AsemiB} and Bogomolov's result.
The projection $T\times T_{M'}\to T$ has equivariant section $T\times \{1_{T_{M'}}\}$.
Thus the induced map
\[ \Br([T/A])\to \Br([T\times T_{M'}/A]) \]
is injective, and we obtain the desired vanishing from \eqref{eqn.FAvanish}.

\section{Grassmannians}
\label{sect:grass}
We fix notation
\[ V=\mathrm{Gr}(r,n)=\mathrm{Gr}(r,U^\circ) \]
for the Grassmannian variety of $r$-dimensional subspaces of a given $n$-dimensional $k$-vector space $U^\circ$.
Here, $1\le r\le n-1$.
Since $\Pic(V)\cong \Z$ any action yields $\rH^1(G,\Pic(V))=0$,
and
\[ \Br([V/G])\cong \rH^2(G)/\mathrm{Am}(V,G). \]

\subsection*{Automorphisms}
When $r=1$, we have projective space $U=\PP(U^\circ)$, with automorphism group $\mathrm{PGL}(U^\circ)$.
Suppose $r\ge 2$.
It is known classically
\cite{chowhomogeneous} that when
$n\ne 2r$ the automorphism group of $V$ is the same as that of $U$, i.e., $\Aut(V)= \PGL(U^\circ)$, while
for $n=2r$ there is the identity component $\PGL(U^\circ)$ of $\Aut(V)$ and a second component of automorphisms, given by isomorphisms $U^\circ\to U^{\circ\vee}$.

\subsection*{Amitsur invariant}
We recall the Amitsur invariant of a projectively linear action (Section \ref{sect:gen}).
Let $G\to \mathrm{PGL}(U^{\circ\vee})$ define a right action of $G$ on $U$, with extension \eqref{eqn.ZGextn} and compatible
\[ \widetilde{G}\to \mathrm{GL}(U^{\circ\vee}). \]
We obtain
$\gamma\in \rH^2(G)$,
with $\mathrm{Am}(U,G)=\langle\gamma\rangle$.

\begin{lemm}
\label{lem.AmitsurGrn}
Let a homomorphism $G\to \mathrm{PGL}(U^{\circ\vee})$ determine $G$-actions on $U$ and on $V$.
If the action on $U$ gives rise to $\gamma\in \rH^2(G)$, with
$\mathrm{Am}(U,G)=\langle \gamma\rangle$,
then for the action on $V$ we have $\mathrm{Am}(V,G)=\langle r\gamma\rangle$.
\end{lemm}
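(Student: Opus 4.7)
The plan is to compute $\mathrm{Am}(V,G)$ directly by tracking the $Z$-character on the tautological subbundle's determinant. Since $\Pic(V)\cong\Z$ is generated by the Plücker class $\cO_V(1)=\det(\cS^\vee)$, where $\cS\subset\underline{U^\circ}$ is the rank $r$ tautological subbundle, and since $G$ acts through $\PGL(U^{\circ\vee})$ preserving the ample cone, we have $\Pic(V)^G=\Pic(V)$. Therefore $\mathrm{Am}(V,G)=\mathrm{im}(\delta_2)$ is cyclic, generated by $\delta_2([\det\cS])$.

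The compatible $\widetilde{G}\to\mathrm{GL}(U^{\circ\vee})$ makes $\underline{U^\circ}$ into a $\widetilde{G}$-linearized bundle on which $Z=\mu_\ell$ acts by a scalar character $\chi$; by the construction of $\gamma$, this $\chi$ is the character that pushes the extension class of \eqref{eqn.ZGextn} forward to $\gamma\in \rH^2(G)$. The subbundle $\cS$ is $\widetilde{G}$-stable and inherits this linearization with the same $Z$-character $\chi$, so taking the $r$-th exterior power, $Z$ acts on the line bundle $\det\cS$ by $\chi^r$.

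Applying the bullet points recalled after \eqref{eqn.ZGextn}, namely $\gamma_L=\delta_2([L])$ together with the fact that $\gamma_E$ depends on $E$ only through its $Z$-character and is linear in that character (compatibly with the additivity $\gamma_{E\otimes E'}=\gamma_E+\gamma_{E'}$), I obtain $\delta_2([\det\cS])=\gamma_{\det\cS}=r\gamma$, hence $\mathrm{Am}(V,G)=\langle r\gamma\rangle$. The main technical point, which I would take care to spell out, is this character-linearity: since $\det\cS=\wedge^r\cS$ is an exterior power rather than a tensor power, the tensor additivity does not apply directly, but $\gamma_L$ is by construction the pullback of the universal extension class in $\rH^2(G,Z)$ along the $Z$-character $Z\to k^\times$, so the scaling $\chi\mapsto \chi^r$ yields multiplication by $r$.
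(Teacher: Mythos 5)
Your argument is correct, and the essential computation is the same as the paper's: the $r$th exterior power scales the $Z$-character of the extension \eqref{eqn.ZGextn} by $r$, hence scales the obstruction class by $r$. The packaging differs slightly. The paper first computes $\Am(\PP(\bigwedge^r U^\circ),G)=\langle r\gamma\rangle$ from the quotient extension $1\to Z/\mu_r\to\widetilde{G}/\mu_r\to G\to 1$ and then transfers this to $V$ via Lemma \ref{lem.PicVW} applied to the Pl\"ucker embedding, using that the embedding induces an isomorphism on Picard groups. You instead stay on $V$ and compute $\delta_2([\det\cS])=\gamma_{\det\cS}$ directly from the $\widetilde{G}$-linearization of the tautological subbundle $\cS\subset\underline{U^\circ}$; since $\det\cS$ is exactly the Pl\"ucker pullback of $\cO(-1)$, the two computations coincide after unwinding. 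Your route trades Lemma \ref{lem.PicVW} for the character-linearity of $\gamma_E$, which is not among the bullet points the paper lists (only tensor-additivity and $\gamma_L=\delta_2([L])$ appear there), but you correctly identify and justify it: $\gamma_E$ is the pushforward of the extension class in $\rH^2(G,Z)$ along the $Z$-character, so replacing $\chi$ by $\chi^r$ multiplies the class by $r$. That is precisely the content of the paper's passage to $\widetilde{G}/\mu_r$, so no gap remains; you correctly flag that exterior powers are not covered by the stated tensor-additivity and supply the right substitute.
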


\begin{proof}
We consider an extension \eqref{eqn.ZGextn} with sufficiently divisible $\ell=|Z|$.
Applying the $r$th exterior power yields
the extension
\[ 1\to Z/\mu_r\to \widetilde{G}/\mu_r\to G\to 1, \]
thus $\mathrm{Am}(\PP(\bigwedge^rU^\circ),G)=\langle r\gamma\rangle$.
We conclude by applying Lemma \ref{lem.PicVW} to the Pl\"ucker embedding $V\to \PP(\bigwedge^rU^\circ)$.
\end{proof}

\begin{lemm}
\label{lem.UtimesV}
Let the notation be as in Lemma \ref{lem.AmitsurGrn}.
Then
\[
\Br_{\mathrm{nr}}(k(U)^G)\cong
\Br_{\mathrm{nr}}(k(U\times V)^G).
\]
\end{lemm}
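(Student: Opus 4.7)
The plan is to observe that $\Br([U\times V/G])$ and $\Br([U/G])$ are canonically identified via the projection $U\times V\to U$, and then to show that the unramified subgroups agree under this identification. The Picard group of $U\times V$ is $\mathbb{Z}\cdot h_U\oplus\mathbb{Z}\cdot h_V$ with trivial $G$-action, so $\rH^1(G,\Pic(U\times V))=0$. By Lemma \ref{lem.AmitsurGrn} applied to $U=\mathbb{P}(U^\circ)$ (with $\mathrm{Am}=\langle\gamma\rangle$) and to $V$ (with $\mathrm{Am}=\langle r\gamma\rangle$), and by the functoriality of Amitsur (Lemma \ref{lem.PicVW}) for the two projections, one gets $\mathrm{Am}(U\times V,G)=\langle\gamma,r\gamma\rangle=\langle\gamma\rangle$. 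Hence the basic exact sequence yields $\Br([U\times V/G])\cong\rH^2(G)/\langle\gamma\rangle$, the projection inducing the identity on this quotient.

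The heart of the argument is the claim that $\Br_{\mathrm{nr}}(k(U\times V)^A)=0$ for every $A\in\mathcal{B}_G$. Mimicking the reduction used in the proof of Theorem \ref{thm.proj}, the line bundles $\mathcal{O}_U(-\ell)\boxtimes\mathcal{O}_V$ (which is $G$-linearized) and $\mathcal{O}_U(-1)\boxtimes\mathcal{O}_V$ (which is $\widetilde{G}$-linearized with $Z$ scalar) identify $\Br_{\mathrm{nr}}(k(U\times V)^A)$ with $\Br_{\mathrm{nr}}(k(U^\circ\times V)^{\widetilde{A}})$, by a chain of purely transcendental extensions and No-Name Lemma style quotienting by $Z$. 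For the latter, the same basic-exact-sequence calculation as above (now over the quasiprojective $U^\circ\times V$) gives $\Br([U^\circ\times V/\widetilde{A}])\cong\rH^2(\widetilde{A})$, noting that the $\widetilde{G}$-action on $U^\circ$ is linear, so the Amitsur classes of the two factors vanish. Apply the bicyclic characterization \eqref{eqn.bicyclicnr} to $\widetilde{A}$: for any $B\in\mathcal{B}_{\widetilde{A}}$, decompose $U^\circ$ into one-dimensional $B$-characters $L_1\oplus\cdots\oplus L_n$, choose any $B$-invariant splitting $U^\circ=U_1\oplus U_2$ with $\dim U_1=r$, and observe that the $B$-invariant open $V^0=\{W\in V:W\cap U_2=0\}$ maps $B$-equivariantly isomorphically to the linear representation $\mathrm{Hom}(U_1,U_2)$ via $W\mapsto (\pi_2|_W)\circ(\pi_1|_W)^{-1}$. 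Hence $U^\circ\times V$ is $B$-birational to the faithful linear $B$-representation $U^\circ\oplus\mathrm{Hom}(U_1,U_2)$, and by Fischer's theorem $\Br_{\mathrm{nr}}(k(U^\circ\times V)^B)=0$. Thus $\Br_{\mathrm{nr}}(k(U^\circ\times V)^{\widetilde{A}})\subseteq\mathrm{B}_0(\widetilde{A})$, which vanishes by Lemma \ref{lem.centralofbicyclic}.

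With this vanishing, the bicyclic characterization applied to $G$ and the embedding $\Br([U\times V/A])\hookrightarrow\Br(k(U\times V)^A)$ show that
\[
\Br_{\mathrm{nr}}(k(U\times V)^G)=\ker\Big(\rH^2(G)/\langle\gamma\rangle\to\bigoplus_{A\in\mathcal{B}_G}\rH^2(A)/\langle\mathrm{res}^2_A\gamma\rangle\Big),
\]
which by Theorem \ref{thm.proj} equals $\Br_{\mathrm{nr}}(k(U)^G)$. The main obstacle is the bicyclic vanishing in Step 2: one cannot work at the level of $A$ directly (since $A\in\mathcal{B}_A$ makes the characterization tautological), so one is forced to climb up to the extension $\widetilde{A}$, which is typically non-abelian, and then descend to abelian bicyclic subgroups $B\subseteq\widetilde{A}$ where the character decomposition of $U^\circ$ makes $V$ equivariantly birational to a linear representation. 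The care required is in verifying that the identifications $V^0\cong\mathrm{Hom}(U_1,U_2)$ really are $B$-equivariant and that the Amitsur/Picard inputs to the basic exact sequence behave as claimed at each stage.
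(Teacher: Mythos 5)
Your argument is correct, but it takes a genuinely different route from the paper's. The paper's proof is a short No-Name argument: by Example \ref{exa.linear} the bundles $\underline{U}^{\circ\oplus r}\otimes\cO_U(1)$ on $U$ and $\mathrm{pr}_2^*S^{\oplus r}\otimes\mathrm{pr}_1^*\cO_U(1)$ on $U\times V$ carry canonical $G$-linearizations, and their total spaces are $G$-equivariantly birational (an $r$-tuple of twisted vectors generically spans an $r$-plane); the No-Name Lemma and stable birational invariance of $\Br_{\mathrm{nr}}$ then give the isomorphism directly, via an honest stable $G$-equivariant birational equivalence between $U$ and $U\times V$. You instead compute both sides by the bicyclic criterion \eqref{eqn.bicyclicnr} and observe that they are the same kernel; the crux is your vanishing $\Br_{\mathrm{nr}}(k(U\times V)^A)=0$ for $A\in\cB_G$, obtained by climbing to $\widetilde{A}$ and using the graph chart $\{W:W\cap U_2=0\}\cong\Hom(U_1,U_2)$ to make $U^\circ\times V$ birationally a faithful linear representation of each bicyclic $B\le\widetilde{A}$, whence Fischer's theorem plus Lemma \ref{lem.centralofbicyclic}. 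This is sound (the equivariance of the graph chart and the computation $\mathrm{Am}(U^\circ\times V,\widetilde{A})=0$ both check out), and the vanishing you isolate is in fact exactly what the proof of Theorem \ref{thm.Grn} extracts from this lemma; but it is considerably longer and in effect re-proves a twisted product version of Bogomolov's formula. What the paper's route buys in addition is the stronger geometric conclusion (a stable equivariant birational equivalence, in the spirit of Remark \ref{rem.stablylinear}); what yours buys is independence from the explicit bundle identification, at the cost of extra bookkeeping with the quasiprojective form of the basic exact sequence.
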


\begin{proof}
By Example \ref{exa.linear} we have a canonical $G$-linearization of the vector bundle $\underline{U}^\circ\otimes \cO_U(1)$ on $U$, hence also of the sum of $r$ copies
$\underline{U}^{\circ{\oplus r}}\otimes \cO_U(1)$.
A similar argument supplies a canonical linearization of the tautological bundle $S$ on $V$, pulled back by the projection $\mathrm{pr}_2\colon U\times V\to V$ and tensored with $\mathrm{pr}_1^*\cO_U(1)$,
hence as well of
$\mathrm{pr}_2^*S^{\oplus r}\otimes \mathrm{pr}_1^*\cO_U(1)$.
We have a $G$-equivariant birational equivalence
\[
\underline{U}^{\circ{\oplus r}}\otimes \cO_U(1)\sim_G \mathrm{pr}_2^*S^{\oplus r}\otimes \mathrm{pr}_1^*\cO_U(1)
\]
and conclude by the stable birational invariance of the unramified Brauer group and the No-Name Lemma.
\end{proof}

\begin{lemm}
\label{lem.abelianonGr}
Let the notation be as in Lemma \ref{lem.AmitsurGrn} and $A$ an abelian subgroup of $G$ of index $d$.
We suppose that $d$ divides $r$, the order of
$\gamma$ is $d$, and
$\gamma\in \ker(\mathrm{res}_A^2)$.
Then $V^G\ne\emptyset$.
\end{lemm}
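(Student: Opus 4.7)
The plan is to produce a $\widetilde{G}$-invariant $r$-dimensional subspace $W\subset U^\circ$; by the identification of Example~\ref{exa.linear}, such a $W$ is the same thing as a $G$-fixed point of $V$. First, by Lemma~\ref{lem.AmitsurGrn} together with the hypothesis $d\mid r$, one has $\Am(V,G)=\langle r\gamma\rangle=0$, so the $\widetilde{G}$-action on $\bigwedge^r U^\circ$ descends to $G$ (equivalently, $Z=\mu_d$ acts by $\zeta\mapsto\zeta^r=1$), and a $G$-fixed point of $V$ corresponds precisely to a $\widetilde{G}$-stable $r$-dimensional subspace of $U^\circ$. Since $\mathrm{res}_A^2(\gamma)=0$, choose a splitting of the restriction of \eqref{eqn.ZGextn} over $A$, realizing $A$ as a subgroup of $\widetilde{G}$ with $\widetilde{A}=A\times Z$ and endowing $U^\circ$ with the structure of a linear $A$-representation. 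Decompose $U^\circ=\bigoplus_\chi U^\circ_\chi$ into $A$-isotypic components.

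The intended construction is to find a character $\chi$ of $A$ with $U^\circ_\chi\neq 0$ whose orbit under $G$ on the set of $A$-characters (acting through $\widetilde{G}$ by conjugation, well-defined when $A$ is normal in $G$; otherwise I would first replace $A$ by its normal core) has the maximal size $d$, together with a subspace $W_0\subset U^\circ_\chi$ of dimension $r/d$. Picking coset representatives $\widetilde{g}_1=1,\widetilde{g}_2,\dots,\widetilde{g}_d$ for $\widetilde{A}$ in $\widetilde{G}$, one sets
\[
W \;=\; \bigoplus_{i=1}^d \widetilde{g}_i W_0.
\]
The distinctness of the translated eigenspaces $\widetilde{g}_i U^\circ_\chi$ (guaranteed by the triviality of the $G/A$-stabilizer of $\chi$) makes the sum direct and of dimension exactly $r$, while $W$ is $\widetilde{G}$-invariant because $W_0$ is $\widetilde{A}$-invariant and multiplication by a further coset representative merely permutes the $d$ summands. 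If no single $\chi$ provides an eigenspace large enough, I would instead take $W_0$ as a direct sum across several free $G$-orbits of $A$-characters, assembling total dimension $r/d$ from pieces in the distinguished eigenspace of each orbit.

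The main obstacle is establishing the existence of sufficiently many free-orbit characters in $U^\circ$. The hypothesis $\ord(\gamma)=d=[G:A]$ is what should power this step: were every character of $A$ appearing in $U^\circ$ to be stabilized by some element of $G$ outside $A$, the $G$-action on the set of appearing eigenspaces would factor through a proper quotient of $G/A$, and, in combination with $\mathrm{res}_A^2(\gamma)=0$ and the Hochschild--Serre analysis recalled in Section~\ref{sect:gen}, this would force $\ord(\gamma)$ to drop strictly below $d$. A careful argument of this shape, most naturally phrased using Clifford theory for projective representations of $G$ and the compatibility of Amitsur classes under induction from inertia subgroups, should produce a free-orbit character with a sufficiently large eigenspace and thereby complete the proof.
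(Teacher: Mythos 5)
Your strategy (assemble a $\widetilde{G}$-invariant $r$-dimensional subspace out of $A$-eigenspaces) is plausible, but the argument has a genuine gap precisely at the step you flag as ``the main obstacle'': the existence of enough $A$-characters in $U^\circ$ with free $G/A$-orbit, with eigenspace dimensions summing to $r/d$, is asserted to follow from ``a careful argument of this shape'' but is never proved, and the heuristic you offer for it is not correct as stated. If every appearing character had nontrivial stabilizer, it does not follow that the action on the set of eigenspaces factors through a single proper quotient of $G/A$ (different characters can have different stabilizers); and a character stabilized by a subgroup $H\supsetneq A$ only tells you that $\mathrm{res}^2_H(\gamma)$ is inflated from $\rH^2(H/A)$, which contradicts $\ord(\gamma)=d$ only after a further argument (a corestriction computation when $H\ne G$, or an exponent bound on $\rH^2(G/A)$ when $H=G$). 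In addition, the whole eigenspace bookkeeping presupposes that $A$ is normal in $G$, so that $G$ permutes the $A$-isotypic components; your proposed fix of replacing $A$ by its normal core destroys the numerical hypotheses $[G:A]=d=\ord(\gamma)$ and $d\mid r$ on which the construction depends, and the lemma is stated (and invoked) for a general finite group $G$.

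For contrast, the paper's proof sidesteps the free-orbit question entirely by an induction on $r$. In the base case $r=d$, the vanishing of $\mathrm{res}^2_A(\gamma)$ lifts the $A$-action, producing $z\in U^A$; the span $\Sigma$ of the $G$-orbit of $z$ is a $G$-invariant subspace of some dimension $m$ with $1\le m\le[G:A]=d$, and Lemma \ref{lem.AmitsurGrn} applied to $\mathrm{Gr}(m,U^\circ)$ forces $m\gamma=0$, hence $d\mid m$, hence $m=d$; for $r>d$ one passes to the Schubert variety of $r$-planes containing $\Sigma$, isomorphic to $\mathrm{Gr}(r-d,n-d)$, and applies the induction hypothesis. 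Note that the divisibility constraint supplied by Lemma \ref{lem.AmitsurGrn} does exactly the work your free-orbit claim is meant to do, and it applies to the orbit of a single line rather than of a whole isotypic component, which is why no Clifford-theoretic analysis (and no normality of $A$) is needed. To salvage your approach you would need to actually prove the free-orbit existence statement, and the natural way to do so already passes through the divisibility argument above.
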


\begin{proof}
We prove the result by induction on $r$.
For the base case $r=d$,
since $\mathrm{res}_A^2(\gamma)=0$ there is a lift $A\to \mathrm{GL}(U^{\circ\vee})$ of the restriction to $A$ of the homomorphism $G\to \mathrm{PGL}(U^{\circ\vee})$.
Therefore $U^A\ne \emptyset$.
We take $z\in U^A$.
Then the linear span $\Sigma\subset U^\circ$ of the $G$-orbit of $z$ is
$G$-invariant.
Lemma \ref{lem.AmitsurGrn} implies $\dim(\Sigma)=d$, so
$[\Sigma]\in V^G$.

If $r>d$, then we take $\Sigma\subset U^\circ$ as above, $\dim(\Sigma)=d$, and let the condition to contain $\Sigma$ define a Schubert variety in $V$, isomorphic to $\mathrm{Gr}(r-d,n-d)$.
The induction hypothesis is applicable and
yields a fixed point.
\end{proof}

\subsection*{Case of projectively linear automorphisms}
Let $G$ act on $V$ via a homomorphism $G\to \PGL(U^{\circ\vee})$.
By Lemma \ref{lem.AmitsurGrn}, we have
\[ \Br([V/G])\cong \rH^2(G)/\langle r\gamma\rangle. \]

\begin{theo}
\label{thm.Grn}
Let a faithful linear action of a finite group $G$ on a projective space $U=\PP(U^\circ)$ be given, with associated class $\gamma\in \rH^2(G)$.
Then, for the induced action of $G$ on the Grassmannian $V=\mathrm{Gr}(r,U^\circ)$, we have
\[
\Br_{\mathrm{nr}}(k(V)^G)\cong\ker\Big(\rH^2(G)/\langle r\gamma\rangle
\to \bigoplus_{A\in \mathcal{B}_G}\rH^2(A)/\langle \mathrm{res}^2_A(r \gamma)\rangle\Big).
\]
\end{theo}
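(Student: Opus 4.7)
The plan is to follow the blueprint of Theorem~\ref{thm.proj}. The basic exact sequence of Section~\ref{sect:gen}, applied to the $G$-variety $V$, together with $\Pic(V)\cong\Z$ carrying trivial $G$-action (so that $\rH^1(G,\Pic(V))=0$) and Lemma~\ref{lem.AmitsurGrn} supplying $\mathrm{Am}(V,G)=\langle r\gamma\rangle$, yields
\[ \Br([V/G])\cong \rH^2(G)/\langle r\gamma\rangle. \]
By the chain of containments \eqref{eqn.containments}, $\Br_{\mathrm{nr}}(k(V)^G)$ sits inside $\rH^2(G)/\langle r\gamma\rangle$, and restriction to a subgroup $A\subseteq G$ is the natural map into $\Br([V/A])\cong \rH^2(A)/\langle \mathrm{res}^2_A(r\gamma)\rangle$.

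The reverse containment is immediate: if $\alpha\in \rH^2(G)/\langle r\gamma\rangle$ maps to zero in $\rH^2(A)/\langle\mathrm{res}^2_A(r\gamma)\rangle$ for every $A\in\mathcal{B}_G$, then $\alpha_A=0\in\Br(k(V)^A)$, hence trivially $\alpha_A\in\Br_{\mathrm{nr}}(k(V)^A)$, and the Bogomolov characterization \eqref{eqn.bicyclicnr} yields $\alpha\in\Br_{\mathrm{nr}}(k(V)^G)$. The forward containment reduces to the key vanishing
\[ \Br_{\mathrm{nr}}(k(V)^A)=0\qquad\text{for every }A\in\mathcal{B}_G, \]
since then any $\alpha\in\Br_{\mathrm{nr}}(k(V)^G)$ has $\alpha_A\in\Br_{\mathrm{nr}}(k(V)^A)=0\subset\Br([V/A])$, which is the required containment.

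The main obstacle is this key vanishing, and I would establish it by mirroring the chain of isomorphisms in the proof of Theorem~\ref{thm.proj}, with the tautological line bundle $\cO_V(-1)=\bigwedge^r S$ playing the role of $\cO_{\PP(V^\circ)}(-1)$. Let $\widetilde{A}\subseteq\widetilde{G}$ be the preimage of $A$, and set $\ell'=\ell/\gcd(\ell,r)$; since the center $Z=\mu_\ell$ acts on $\cO_V(-1)$ through the character $z\mapsto z^r$, the bundle $\cO_V(-\ell')$ carries a $G$-linearization. The No-Name Lemma and the $Z$-quotient $\cO_V(-1)\to\cO_V(-\ell')$ give
\[ \Br_{\mathrm{nr}}(k(V)^A)\cong \Br_{\mathrm{nr}}(k(\cO_V(-\ell'))^A)\cong \Br_{\mathrm{nr}}(k(\cO_V(-1))^{\widetilde{A}}). \]
The hard step is the passage from the line bundle to a genuinely linear situation: the key geometric observation is that $\cO_V(-1)$ minus its zero section is the quotient of the Stiefel open subset $\mathrm{St}(r,U^\circ)\subseteq U^{\circ\oplus r}$ by the right-multiplication action of $\SL_r$, which commutes with the left diagonal action of $\widetilde{G}$. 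Because $\SL_r$ is special and acts generically freely on $\mathrm{St}/\widetilde{A}$, the induced $\SL_r$-torsor is Zariski-locally trivial, so
\[ \Br_{\mathrm{nr}}(k(\cO_V(-1))^{\widetilde{A}})=\Br_{\mathrm{nr}}(k(\mathrm{St})^{\widetilde{A}\times\SL_r})=\Br_{\mathrm{nr}}(k(\mathrm{St})^{\widetilde{A}})=\Br_{\mathrm{nr}}(k(U^{\circ\oplus r})^{\widetilde{A}}). \]
The rightmost group equals $\mathrm{B}_0(\widetilde{A})$ by Bogomolov's theorem \eqref{eqn:ind} applied to the faithful linear $\widetilde{A}$-representation $U^{\circ\oplus r}$, and this vanishes by Lemma~\ref{lem.centralofbicyclic} since $\widetilde{A}$ is a central cyclic extension of the bicyclic group $A$.
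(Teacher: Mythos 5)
Your overall frame agrees with the paper: the reverse containment from \eqref{eqn.bicyclicnr}, the identification $\Br([V/G])\cong \rH^2(G)/\langle r\gamma\rangle$ via Lemma \ref{lem.AmitsurGrn}, and the reduction of the forward containment to the vanishing of $\Br_{\mathrm{nr}}(k(V)^A)$ for all $A\in\mathcal{B}_G$ are all exactly as in the paper's proof. The gap is in your proof of that key vanishing, specifically in the assertion that $\SL_r$ acts generically freely on $\mathrm{St}/\widetilde{A}$. Identify $x\in\mathrm{St}(r,U^\circ)$ with an injective linear map $k^r\to U^\circ$, so that $\SL_r$ acts by precomposition and $\widetilde{A}$ by postcomposition. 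An element $z\in Z=\mu_\ell$ acts on $U^\circ$ by the scalar $z$, hence $z\cdot x=x\cdot (zI_r)$; since $zI_r\in\SL_r$ exactly when $z^r=1$, the central subgroup $\{\zeta I_r:\zeta\in\mu_{\gcd(\ell,r)}\}\subset\SL_r$ acts trivially on $\mathrm{St}/\widetilde{A}$ (and a short computation shows this is precisely the generic stabilizer). So when $\gcd(\ell,r)>1$ the map $\mathrm{St}/\widetilde{A}\to\mathrm{St}/(\SL_r\times\widetilde{A})$ is generically a torsor under $\SL_r/\mu_{\gcd(\ell,r)}$, which is not special for $r\ge 2$, and the conclusion that $k(\mathrm{St})^{\widetilde{A}}$ is purely transcendental over $k(\mathrm{St})^{\SL_r\times\widetilde{A}}$ does not follow. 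Note that $\gcd(\ell,r)>1$ is unavoidable exactly when the order of $\gamma$ shares a factor with $r$, i.e., exactly when $\langle r\gamma\rangle\subsetneq\langle\gamma\rangle$ and the theorem says something beyond Theorem \ref{thm.proj}; in the coprime case your argument does go through, but that case is not the substance of the statement.

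What your construction can honestly deliver in the problematic case is weaker: the map $\Br(k(\mathrm{St})^{\SL_r\times\widetilde{A}})\to\Br(k(\mathrm{St})^{\widetilde{A}})$ may have a kernel generated by the class of the nontrivial $\SL_r/\mu_{\gcd(\ell,r)}$-torsor, a class lying in $\langle\mathrm{res}^2_A(\gamma)\rangle$, so at best one obtains
\[ \Br_{\mathrm{nr}}(k(V)^A)\subseteq \langle \mathrm{res}_A^2(\gamma)\rangle/\langle\mathrm{res}_A^2(r\gamma)\rangle, \]
which is precisely the intermediate statement \eqref{eqn.resquotient} that the paper reaches by a different device (Lemma \ref{lem.UtimesV}: twisting $\mathrm{pr}_2^*S^{\oplus r}$ by $\mathrm{pr}_1^*\cO_U(1)$ on $U\times V$ to produce an honestly $G$-linearized bundle, then comparing with $\underline{U}^{\circ\oplus r}\otimes\cO_U(1)$). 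The remaining potentially nonzero classes must then be killed by a separate argument; the paper does this with the fixed-point criterion of Lemma \ref{lem.fixedpoint} applied to carefully chosen subgroups $A''\subseteq A'\subseteq A$, using Lemma \ref{lem.abelianonGr} to produce fixed points on $V$. This entire second half of the argument is missing from your proposal.
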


\begin{proof}
As in other cases, we divide the assertion into a forwards containment and a reverse containment.
The forwards containment follows from the claim, that
for $A\in \mathcal{B}_G$ we have
$\Br_{\mathrm{nr}}(k(V)^A)=0$.
The reverse containment holds by \eqref{eqn.bicyclicnr}.

We establish the claim.
Let $A\in \mathcal{B}_G$ and $\alpha\in \Br([V/A])$.
If $\alpha$ lies in $\Br_{\mathrm{nr}}(k(V)^A)$,
then
the image of $\alpha$ in $\Br([U\times V/A])$ lies in
$\Br_{\mathrm{nr}}(k(U\times V)^A)$,
which by Lemma \ref{lem.UtimesV} is isomorphic to $\Br_{\mathrm{nr}}(k(U)^A)$.
So by Theorem \ref{thm.proj},
\begin{equation}
\label{eqn.resquotient}
\alpha\in \langle \mathrm{res}_A^2(\gamma)\rangle/\langle\mathrm{res}_A^2(r\gamma)\rangle.
\end{equation}

We write $A\cong \Z/e\Z\oplus \Z/f\Z$ with $e\mid f$ and
let $d$ denote the order of the quotient group in \eqref{eqn.resquotient}.
So, $d=\gcd(r,s)$, where $s$ is the order of $\mathrm{res}_A^2(\gamma)$ in $\rH^2(A)\cong \Z/e\Z$.
We consider the subgroups $A''\subseteq A'\subseteq A$, corresponding to
\[ \Z/{\textstyle\frac{e}{s}\Z}\oplus \Z/f\Z\subseteq {\textstyle \Z/\frac{de}{s}\Z}\oplus \Z/f\Z\subseteq \Z/e\Z\oplus \Z/f\Z. \]
We have $r\gamma\in \ker(\mathrm{res}_{A'}^2)$, with the quotient group in \eqref{eqn.resquotient} mapping isomorphically to $\langle \mathrm{res}_{A'}^2(\gamma)\rangle$.
As well, $\gamma\in \ker(\mathrm{res}_{A''}^2)$.
Lemma \ref{lem.abelianonGr} is applicable and gives $V^{A'}\ne \emptyset$.
We apply Lemma \ref{lem.fixedpoint} to conclude $\alpha=0$.
\end{proof}

\subsection*{General case}
Theorem \ref{thm.Grn} gives a complete treatment of faithful actions on Grassmannians, except when $r\ge 2$ and $n=2r$, which we suppose from now on.
With the classical terminology \cite{chowhomogeneous}, $\Aut(V)$ consists of \emph{collineations}, given by projective linear automorphisms of $U^\circ$, and \emph{correlations}, given by projective isomorphisms $U^\circ\to U^{\circ\vee}$.
In formulas, for $\psi\in \GL(U^\circ)$ the collineation $L_{[\psi]}$ of $[\psi]\in \PGL(U^\circ)$ is
\[ L_{[\psi]}([\Sigma])=[\psi(\Sigma)], \]
while the correlation $C_{[\varphi]}$, for an isomorphism
$\varphi\colon U^\circ\to U^{\circ\vee}$, is
\[ C_{[\varphi]}([\Sigma])=[\Sigma']\qquad\text{with}\qquad \varphi(\sigma)(\sigma')=0 \quad  \,\forall\,\sigma\in \Sigma,\,\sigma'\in \Sigma'. \]
We have
\begin{equation}
\label{eqn.CCL}
C_{[\varphi]}\circ C_{[\varphi]}=L_{[\varphi^{{-1}\vee}\circ \varphi]}.
\end{equation}
As well, $C_{[\varphi]}$ and $L_{[\psi]}$ commute if and only if
\begin{equation}
\label{eqn.commute}
[\psi^\vee\circ \varphi\circ \psi]=[\varphi].
\end{equation}

\begin{theo}
\label{thm.Grngeneral}
Let a faithful action of a finite group $G$ on a Grassmannian $V=\mathrm{Gr}(r,n)=\mathrm{Gr}(r,U^\circ)$ be given, $\dim(U^\circ)=n$, and let $\beta\in \rH^2(G)$ be the class associated with the
projective linear action on Pl\"ucker coordinates $G\to \PGL(\bigwedge^r U^{\circ\vee})$.
Then we have
\[
\Br_{\mathrm{nr}}(k(V)^G)\cong
\ker\Big(\rH^2(G)/\langle \beta\rangle
\to \bigoplus_{A\in \mathcal{B}_G}\rH^2(A)/\langle \mathrm{res}^2_A(\beta)\rangle\Big).
\]
\end{theo}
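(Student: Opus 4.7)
The plan is to follow the architecture of the proof of Theorem \ref{thm.Grn}, with the Pl\"ucker ambient space $W:=\PP(\bigwedge^rU^\circ)$ playing the role that $U=\PP(U^\circ)$ played in the collineation-only case. First, I identify the middle group of the chain of inclusions \eqref{eqn.containments}: the Pl\"ucker embedding $V\hookrightarrow W$ is $G$-equivariant and induces an isomorphism on Picard groups, so by Lemma \ref{lem.PicVW} we have $\mathrm{Am}(V,G)=\mathrm{Am}(W,G)=\langle\beta\rangle$. Together with $\rH^1(G,\Pic(V))=0$ (since $\Pic(V)\cong\Z$), the basic exact sequence then yields $\Br([V/G])\cong\rH^2(G)/\langle\beta\rangle$.

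The reverse containment goes through as in the earlier theorems: if $\alpha\in\rH^2(G)/\langle\beta\rangle\cong\Br([V/G])$ has trivial restriction in $\rH^2(A)/\langle\beta|_A\rangle$ for every $A\in\mathcal{B}_G$, then its image in each $\Br([V/A])\subseteq\Br(k(V)^A)$ vanishes, so trivially $\alpha_A\in\Br_{\mathrm{nr}}(k(V)^A)$, and \eqref{eqn.bicyclicnr} places $\alpha$ in $\Br_{\mathrm{nr}}(k(V)^G)$.

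For the forward containment, the task reduces, as before, to proving $\Br_{\mathrm{nr}}(k(V)^A)=0$ for every bicyclic $A\subseteq G$. When $A$ consists only of collineations, Theorem \ref{thm.Grn} directly applies. When $A$ contains a correlation, write $A^0:=A\cap\PGL(U^\circ)$ for its index-two collineation subgroup and fix a correlation $\theta=C_{[\varphi]}\in A\setminus A^0$. I then propose to imitate the proof of Theorem \ref{thm.Grn} with $W$ in place of $U$: establish an analog of Lemma \ref{lem.UtimesV} giving $\Br_{\mathrm{nr}}(k(V\times W)^A)\cong\Br_{\mathrm{nr}}(k(W)^A)$, constructed from an $A$-equivariant birational equivalence between a suitable twist of the pullback to $V\times W$ of a sum of tautological-type bundles and the corresponding trivial bundle --- using that the canonical linearization on $\bigwedge^rU^\circ$ for the central cyclic extension $\widetilde G$ descends, after twisting by $\cO_V(1)$, to a $G$-linearization. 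Theorem \ref{thm.proj} then yields $\Br_{\mathrm{nr}}(k(W)^A)=0$ for bicyclic $A$, so the pullback of any $\alpha\in\Br_{\mathrm{nr}}(k(V)^A)$ to $V\times W$ vanishes. Computing the kernel of $\Br([V/A])\to\Br([V\times W/A])$ in terms of Amitsur classes confines $\alpha$ to a small cyclic subgroup, which is killed by Lemma \ref{lem.fixedpoint} applied at a fixed point of a carefully chosen subgroup $A'\subseteq A$.

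The main obstacle will be the correlation-aware analog of Lemma \ref{lem.abelianonGr}: the production of a subgroup $A'\subseteq A$ with $V^{A'}\ne\emptyset$ and $\beta|_{A'}=0$. When $\theta$ has order two, $V^\theta$ is a Lagrangian subgrassmannian for the symmetric or skew-symmetric form determined by $\varphi$, and an induction using the collineation case of Lemma \ref{lem.abelianonGr} applied to the residual action of $A^0$ on $V^\theta$ should yield the desired subgroup. When $\theta$ has higher order, the argument must instead be set up to work inside the fixed locus of an appropriate power of $\theta$ before the collineation-case input is applied; this is where the genuinely new technical work lies.
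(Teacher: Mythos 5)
Your frame (Amitsur group via the Pl\"ucker embedding and Lemma \ref{lem.PicVW}, reverse containment via \eqref{eqn.bicyclicnr}, reduction to $\Br_{\mathrm{nr}}(k(V)^A)=0$ for bicyclic $A$) matches the paper, but the core of your forward containment in the correlation case has a gap that I do not think can be repaired as stated. Your proposed analog of Lemma \ref{lem.UtimesV} with $W=\PP(\bigwedge^rU^\circ)$ in place of $U$ cannot be built from ``tautological-type bundles'': when $A$ contains a correlation $C_{[\varphi]}$, neither the tautological bundle $S$ on $V$ nor the trivial bundle $\underline{U}^\circ$ is $A$-equivariant (a correlation maps $U^\circ$ to $U^{\circ\vee}$ and sends $\Sigma$ to $\Sigma^{\perp_\varphi}$, with no natural map $\Sigma\to\Sigma^{\perp_\varphi}$), so the birational equivalence underlying Lemma \ref{lem.UtimesV} has no equivariant counterpart here. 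The one bundle that \emph{is} naturally $A$-linearized, $\underline{\bigwedge^rU^\circ}\otimes\cO_V(1)$ on $V$, exhibits $V\times W$ as the projectivization of a linearized bundle \emph{over $V$}, which by No-Name gives $\Br_{\mathrm{nr}}(k(V\times W)^A)\cong\Br_{\mathrm{nr}}(k(V)^A)$ --- the opposite identification from the one you need, and circular for the purpose of proving $\Br_{\mathrm{nr}}(k(V)^A)=0$. (Note also that if your claimed isomorphism $\Br_{\mathrm{nr}}(k(V\times W)^A)\cong\Br_{\mathrm{nr}}(k(W)^A)$ held, the kernel of $\Br([V/A])\to\Br([V\times W/A])$ would be $\langle\beta_A\rangle/\langle\beta_A\rangle=0$ and you would be done instantly, with no need for any fixed-point lemma --- a sign that the claim is too strong.)

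The paper avoids all of this with a shorter reduction you did not find: for the index-two collineation subgroup $A'=A\cap G'$ one already knows $\Br_{\mathrm{nr}}(k(V)^{A'})=0$ from Theorem \ref{thm.Grn}, so any $\alpha\in\Br_{\mathrm{nr}}(k(V)^A)$ lies in $\ker\big(\Br([V/A])\to\Br([V/A'])\big)$. Since the $A'$-action lifts to a linear action (via \eqref{eqn.CCL} and \eqref{eqn.commute4}), $\beta_A$ is $2$-torsion, and a short order count shows this kernel is nontrivial only when $\beta_A=0$; Lemma \ref{lem.correlation} then produces $V^A\ne\emptyset$ and Lemma \ref{lem.fixedpoint} kills $\alpha$. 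The remaining substance --- that for bicyclic $A$ involving correlations $\beta_A=0$ if and only if $V^A\ne\emptyset$ --- is exactly the part you flag as ``genuinely new technical work'' and leave unproved; in the paper it is a nontrivial induction on $r$ using the decomposition $\varphi=\varphi_++\varphi_-$, normal forms of skew-symmetric matrices compatible with diagonalizability of $\varphi^{-1\vee}\circ\varphi$, and, in the symmetric case, the two components of the maximal orthogonal Grassmannian together with the determinant character of the commuting collineation. Your sketch for order-two correlations points in the right direction, but without this lemma (and with the $V\times W$ step broken) the proof is incomplete.
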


\begin{proof}
We have $\Am(V,G)=\langle\beta\rangle$ by Lemma \ref{lem.PicVW}, applied to the Pl\"ucker embedding.
The statement is thus just Theorem \ref{thm.Grn}, unless $r\ge 2$ and $n=2r$, and the action of $G$ involves correlations;
we suppose this from now on.
We need to show that for $A\in \mathcal{B}_G$ we have $\Br_{\mathrm{nr}}(k(V)^A)=0$.
This is already known (proof of Theorem \ref{thm.Grn}) unless the action of $A$ involves correlations; we suppose this as well.
For the index $2$ subgroup $A'$ of $A$, where the action is by collineations, we have $\Br_{\mathrm{nr}}(k(V)^{A'})=0$.

Let $\alpha\in \Br([V/A])\cong \rH^2(A)/\langle\mathrm{res}^2_A(\beta)\rangle$.
If $\alpha\in \Br_{\mathrm{nr}}(k(V)^A)$, then $\alpha$ lies in the kernel of $\Br([V/A])\to \Br([V/A'])$.
The nontriviality of this kernel forces the cyclic group $\rH^2(A)$ to be of even order and the order of $\beta_A$ to be odd.
Then we conclude by Lemma \ref{lem.fixedpoint}, using the following lemma for the existence of a fixed point.
\end{proof}

\begin{lemm}
\label{lem.correlation}
Let $A$ be a bicyclic group, acting on $V=\mathrm{Gr}(r,n)$, $n=2r$.
We suppose that
if $r\ge 2$ then the action involves correlations.
We let $\beta\in \rH^2(A)$ be the class, associated with the projective linear action on Pl\"ucker coordinates.
Then $\beta$ is $2$-torsion, and we have
\[ \beta=0\qquad\text{if and only if}\qquad V^A\ne \emptyset. \]
\end{lemm}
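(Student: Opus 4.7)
The statement has three parts: $V^A\neq\emptyset\Rightarrow\beta=0$, the $2$-torsion of $\beta$, and $\beta=0\Rightarrow V^A\neq\emptyset$. The first is routine: a fixed point $v\in V^A$ gives a section of $[V/A]\to BA$, so $\delta_2\colon\Pic(V)^A\to\rH^2(A,k^\times)$ vanishes on $[\cO_V(1)]$ and $\beta=0$.

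For the $2$-torsion, the key input is the canonical non-degenerate bilinear form $B\colon W\otimes W\to\wedge^{2r}U^\circ\cong k$, $B(w_1,w_2):=w_1\wedge w_2$, on $W:=\wedge^rU^\circ$, symmetric for even $r$ and skew for odd $r$. Collineations $L_{[\psi]}$ act on $W$ as $\wedge^r\psi$, rescaling $B$ by $\det\psi$; a correlation $C_{[\varphi]}$ lifts to $B^{-1}\circ\wedge^r\varphi\in\GL(W)$, and for the standard identification $e_i\mapsto e^i$ this is the complement map $e_I\mapsto\pm e_{I^c}$, preserving $B$ exactly by direct basis calculation, while any other correlation differs from this by a collineation. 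Thus the Pl\"ucker image of $\Aut(V)$ lies in $\mathrm{CO}(W,B)=k^\times\cdot\mathrm{O}(W,B)$, and $\Aut(V)\hookrightarrow\mathrm{PO}(W,B)\subset\PGL(W)$. The sequence $1\to\mu_2\to\mathrm{O}(W,B)\to\mathrm{PO}(W,B)\to 1$ (with $\mathrm{O}$ replaced by $\mathrm{Sp}$ when $B$ is skew) together with $\mathrm{O}(W,B)\hookrightarrow\GL(W)$ exhibits $\beta$ as the image of a class in $\rH^2(A,\mu_2)$ under $\rH^2(A,\mu_2)\to\rH^2(A,k^\times)$, so $\beta$ is $2$-torsion.

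For the main implication $\beta=0\Rightarrow V^A\neq\emptyset$: from a lift $A\to\GL(W)$, which preserves $B$ up to a scaling character $\lambda\colon A\to k^\times$, one twists by a character square root of $\lambda$ (exists since $k^\times$ is divisible) to get a refined lift $A\to\mathrm{O}(W,B)$, giving $W=\bigoplus_\chi W_\chi$ with $B$ pairing $W_\chi\leftrightarrow W_{\chi^{-1}}$. Pick a correlation $a\in A\setminus A'$. Since $\beta_{A'}=r\gamma_{A'}=0$, one realizes $A'$ via $\widetilde{A'}\to\GL(U^\circ)$ with $\mu_r$ central kernel, decomposing $U^\circ=\bigoplus_\eta U_\eta$. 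The intertwining $\varphi_a\psi_b=c_{a,b}\psi_b^{-\vee}\varphi_a$ produces a character $\chi_a\in\widehat{A'}$ and an involution $\iota\colon\eta\mapsto\chi_a\eta^{-1}$ on characters, with $\dim U_\eta=\dim U_{\iota(\eta)}$; the identity $c_{a,a^2}=1$ (direct from $T^\vee\varphi_aT=\varphi_a$ for $T=\varphi_a^{-\vee}\varphi_a$) then implies that on each $\iota$-fixed $U_\eta$ (with $\eta^2=\chi_a$) the induced form $B_{\varphi_a,\eta}(u,v):=\varphi_a(u)(v)$ is symmetric or skew according to $\eta(a^2)=\pm 1$. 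An $A$-fixed $r$-plane $\Sigma=\bigoplus_\eta\Sigma_\eta$ then amounts to free choices of $\Sigma_\eta$ on $\iota$-orbits of size two, together with maximal isotropic subspaces of $U_\eta$ for each $\iota$-fixed $\eta$; these exist over $k$ provided $\dim U_\eta$ is even.

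The main obstacle is showing that $\beta=0$ forces $\dim U_\eta$ to be even for every $\iota$-fixed character $\eta$. This is where the hypothesis does its essential work, to be extracted by reconciling the $\mathrm{O}(W,B)$-lift of $A$ with $\wedge^r$ of the $\widetilde{A'}$-lift of $U^\circ$ and exploiting the $A^\vee/2A^\vee$ ambiguity in the $\mu_2$-obstruction whose image in $\rH^2(A,k^\times)$ is $\beta$.
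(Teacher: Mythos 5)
Your forward implication and your $2$-torsion argument are fine; in fact the $2$-torsion argument via the wedge pairing $B(w_1,w_2)=w_1\wedge w_2$ on $W=\bigwedge^r U^\circ$, placing the Pl\"ucker image of $\Aut(V)$ inside $\mathrm{PO}(W,B)$ so that $\beta$ lifts to $\rH^2(A,\mu_2)$, is a clean alternative to the paper's route (the paper instead rescales $\psi$ so that $\psi^\vee\circ\varphi\circ\psi=\varphi$, deduces that the index-$2$ collineation subgroup $A'$ lifts to a linear action on $U^\circ$, and concludes that $\beta\in\ker(\rH^2(A)\to\rH^2(A'))$ is $2$-torsion). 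One small caveat there: a character square root of the conformal scaling character $\lambda$ need not exist in the finite group $\Hom(A,k^\times)$ merely because $k^\times$ is divisible, so your refined lift lands a priori only in $k^\times\cdot\mathrm{O}(W,B)$; this is repairable but should not be waved through.

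The real problem is the implication $\beta=0\Rightarrow V^A\ne\emptyset$, which is the substance of the lemma: your final paragraph names "the main obstacle" (that $\beta=0$ forces $\dim U_\eta$ to be even for every $\iota$-fixed character $\eta$ with symmetric restricted form) and then defers it, so the proof is not complete. This parity statement is exactly where the hypothesis must be converted into geometry, and nothing in your set-up yet does that. For comparison, the paper proceeds by induction on $r$: it splits $\varphi=\varphi_++\varphi_-$ into symmetric and skew parts, peels off eigenvectors of $\psi$ (in $\ker\varphi_+$, or in eigenspaces of $\varphi^{-1\vee}\circ\varphi$ after putting the skew part in block normal form) to descend to smaller Grassmannians, and in the terminal case $\varphi=\varphi^\vee$ identifies the relevant $\mu_2$-invariant with the determinant character $\det(D)\in\{\pm1\}$ of the orthogonal matrix $D$ representing $\psi$, via the action on the two components of the fixed locus $V^{C_{[\varphi]}}\cong \mathrm{SO}_n/\mathrm{P}_r\sqcup \mathrm{SO}_n/\mathrm{P}_r$; then $\beta=0$ forces $\det(D)=1$, each component is preserved, and a fixed point exists. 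Your missing parity claim is precisely the character-theoretic shadow of this determinant computation, and until you carry out the "reconciliation" of the $\mathrm{O}(W,B)$-lift with $\bigwedge^r$ of the $\widetilde{A'}$-lift that you allude to, the main direction of the lemma remains unproved.
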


\begin{proof}
If $r=1$ then the assertions are clear, so we suppose $r\ge 2$.
We may write
\[ A\cong \Z/e\Z\oplus \Z/f\Z, \]
where the respective generators are a correlation
$C_{[\varphi]}$ and a collineation $L_{[\psi]}$.
They commute.
In fact, the corresponding equation \eqref{eqn.commute}
may be strengthened to
\begin{equation}
\label{eqn.commute2}
\psi^\vee\circ\varphi\circ\psi=\varphi
\end{equation}
by suitably rescaling $\psi$.
From \eqref{eqn.commute2} and its equivalent form
\begin{equation}
\label{eqn.commute3}
\psi^\vee\circ\varphi^\vee\circ\psi=\varphi^\vee
\end{equation}
we obtain
\begin{equation}
\label{eqn.commute4}
\psi\circ \varphi^{{-1}\vee}\circ \varphi=\varphi^{{-1}\vee}\circ \varphi\circ \psi. \end{equation}
By \eqref{eqn.CCL} and \eqref{eqn.commute4}, the action of $A'$ (by collineations) lifts to a linear action.
So $\beta$ lies in the kernel of $\rH^2(A)\to \rH^2(A')$ and thus is $2$-torsion.

Existence of a fixed point clearly implies that $\beta$ vanishes.
It remains to show that the vanishing of $\beta$ implies the existence of a fixed point.
We do this by induction on $r$, where the base case $r=1$ is already clear.

We consider
\[ \varphi_+=\frac{1}{2}(\varphi+\varphi^\vee)\qquad\text{and}\qquad \varphi_-=\frac{1}{2}(\varphi-\varphi^\vee), \]
which determine a symmetric, respectively skew-symmetric bilinear form on $U^\circ$.
By \eqref{eqn.commute2}--\eqref{eqn.commute3} the
analogous identities for $\varphi_+$ and $\varphi_-$ also hold.
In particular, $\psi$ induces an
automorphism of $\ker(\varphi_+)$.

If $\varphi_+$ is degenerate, i.e., $\ker(\varphi_+)\ne 0$, then we may take $v\in \ker(\varphi_+)$ to be an
eigenvector of $\psi$.
There is a Schubert variety in $V$,
of $r$-dimensional spaces containing and orthogonal to $v$ (with respect to $\varphi_-$).
We apply the induction hypothesis and obtain a fixed point.

It remains to treat the case that $\varphi_+$ is nondegenerate.
Choosing an orthonormal basis of $U^\circ$ for the associated symmetric bilinear form, with dual basis of $U^{\circ\vee}$, we get a representing matrix
\[ B=I+B_- \]
for $\varphi$, where $I$ denotes the identity matrix, and the matrix $B_-$ represents $\varphi_-$ and is skew-symmetric.
The representing matrix for $\varphi^{{-1}\vee}\circ \varphi$ is
\[ C=(B^{-1})^tB. \]
We let $D$ denote the representing matrix for $\psi$; then
\[ D^tBD=B\qquad\text{and}\qquad DC=CD. \]

Suppose $B_-\ne 0$.
An orthogonal change of basis can be made to bring the matrix $B_-$ into a
normal form \cite[\S XI.4]{gantmacher}.
In the simplest case this is a block diagonal matrix with $2\times 2$-blocks
\begin{equation}
\label{eqn.block}
\begin{pmatrix} 0 & \lambda \\ -\lambda & 0 \end{pmatrix},\qquad \lambda\in k^\times,
\end{equation}
and possibly an additional zero block.
Generally there can be larger blocks, skew-symmetric analogues of the larger Jordan blocks.
But these, if present, would obstruct the diagonalizability of $C$.
Since some power of $C$ is identity, $C$ is diagonalizable, and
the normal form of $B_-$ has all nonzero blocks of the form \eqref{eqn.block}.
The fact that $D$ commutes with $C$ implies that $D$ preserves the eigenspaces of $C$.
(Always $\lambda^2\ne -1$, since $B$ is invertible, and eigenvalues $1\pm \lambda\sqrt{-1}$ of $B$ correspond to eigenvalues $(1\pm \lambda\sqrt{-1})/(1\mp \lambda\sqrt{-1})$ of $C$.)
We conclude by choosing an eigenvector and appealing to the induction hypothesis, as in the previous case.

We are left with the case $B_-=0$.
Then $B=I$, and the matrix $D$ is orthogonal.
The fixed locus
\[ V^{C_{[\varphi]}}=\{[\Sigma]\in V\,|\,\Sigma^\perp=\Sigma\} \]
is a disjoint union of two copies of the maximal orthogonal Grassmannian $\mathrm{SO}_n/\mathrm{P}_r$ (parabolic subgroup $\mathrm{P}_r$ corresponding to an end root of the Dynkin diagram $\mathsf D_r$), acted upon transitively by the orthogonal group.
We fix $[\Sigma]\in V^{C_{[\varphi]}}$ and a lift $\rho\in \GL(\bigwedge^r U^\circ)$ of $C_{[\varphi]}$.
A nontrivial homomorphism from the orthogonal group to $\{\pm 1\}$ is defined by
$\omega\mapsto \lambda'/\lambda$, where $\lambda$ and $\lambda'$ are the respective eigenvalues of $\bigwedge^r\Sigma$ and $\bigwedge^r\omega(\Sigma)$:
\[ \rho(v)=\lambda v,\qquad \rho({\textstyle\bigwedge^r}\!\omega(v))=\lambda'{\textstyle\bigwedge^r}\!\omega(v)\qquad\text{for $\textstyle v\in \bigwedge^r\Sigma$}. \]
This has to be the determinant.
So, $\beta=0$ implies $\det(D)=1$.
Then $L_{[\psi]}$ maps each component of $V^{C_{[\varphi]}}$ to itself, and
$V^A\ne \emptyset$.
\end{proof}

\section{Flag varieties}
\label{sect:flag}
We fix a $k$-vector space $U^\circ$ of dimension $n$, a positive integer $m$, and positive integers $r_1$, $\dots$, $r_m$ with
\[ 1\le r_1<\dots<r_m\le n-1. \]
In this section we extend our treatment to the partial flag variety
$$
V=\mathrm{F}\ell(r_1,\dots,r_m;n)=\mathrm{F}\ell(r_1,\dots,r_m;U^\circ)
$$
of nested subspaces of dimensions $r_1$, $\dots$, $r_m$ of $U^\circ$.
When $m=1$ this is just a Grassmannian variety (Section \ref{sect:grass}), so we assume $m\ge 2$.

\subsection*{Automorphisms}
We obtain a complete description of $\Aut(V)$ from
\cite{Demazure}.
There is an identity component $\PGL(U^\circ)$,
which is the full automorphism group except when the integers $r_1$, $\dots$, $r_m$ satisfy the symmetry condition
\[ r_i+r_{m+1-i}=n,\qquad \forall\, i. \]
In that case, as in Section \ref{sect:grass}, $\Aut(V)$ has a second component, consisting of correlations.
The action on
\[ \Pic(V)\cong \Z^m \]
is trivial (when $\Aut(V)=\PGL(U^\circ)$) or by an involutive permutation (when the symmetry condition holds).
So,
\[ \rH^1(G,\Pic(V))=0, \]
and
\[ \Br([V/G])\cong \rH^2(G)/\Am(V,G). \]

\subsection*{Projectively linear action}
Suppose that $G$ acts on $V$ via a homomorphism $G\to \PGL(U^{\circ\vee})$.
Let $\gamma\in \rH^2(G)$ be the associated class (Example \ref{exa.linear}).
Applying Lemma \ref{lem.PicVW} to the natural morphism from $V$ to the product of the Grassmannians $\mathrm{Gr}(r_i,U^\circ)$, we obtain
\[ \Am(V,G)=\langle r_1\gamma,\dots,r_m\gamma\rangle=\langle q\gamma\rangle,\qquad q=\gcd(r_1,\dots,r_m). \]

\begin{theo}
\label{thm.Fl}
Let a faithful linear action of a finite group $G$ on a projective space $U=\PP(U^\circ)$ be given, with associated class $\gamma\in \rH^2(G)$.
Then, for the induced action of $G$ on the flag variety $V=\mathrm{F}\ell(r_1,\dots,r_m;U^\circ)$ we have
\[
\Br_{\mathrm{nr}}(k(V)^G)\cong\ker\Big(\rH^2(G)/\langle q\gamma\rangle
\to \bigoplus_{A\in \mathcal{B}_G}\rH^2(A)/\langle \mathrm{res}^2_A(q \gamma)\rangle\Big),
\]
where $q=\gcd(r_1,\dots,r_m)$.
\end{theo}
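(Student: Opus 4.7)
The plan is to follow the template of Theorem \ref{thm.Grn}, adapting the two key geometric lemmas to the flag variety setting. The reverse containment is immediate from \eqref{eqn.bicyclicnr}: an element of $\rH^2(G)/\langle q\gamma\rangle$ lying in the displayed kernel has, for each $A\in\mathcal{B}_G$, trivial image in $\Br([V/A])\cong \rH^2(A)/\langle \mathrm{res}^2_A(q\gamma)\rangle$, hence is unramified in $\Br(k(V)^A)$. For the forwards containment, I would show $\Br_{\mathrm{nr}}(k(V)^A)=0$ for every $A\in\mathcal{B}_G$ via two flag-variety analogs of the Grassmannian lemmas.

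The first analog (generalizing Lemma \ref{lem.UtimesV}) asserts $\Br_{\mathrm{nr}}(k(U)^G)\cong\Br_{\mathrm{nr}}(k(U\times V)^G)$. For this, I would introduce the $G$-linearized vector bundles
\[ E_1 = \underline{U}^{\circ\oplus r_m}\otimes\cO_U(1) \text{ on } U, \qquad E_2 = \bigoplus_{i=1}^m \mathrm{pr}_2^*S_i^{\oplus(r_i-r_{i-1})}\otimes \mathrm{pr}_1^*\cO_U(1) \text{ on } U\times V, \]
with $S_i$ the rank-$r_i$ tautological subbundle on $V$ and $r_0=0$. Each summand $\mathrm{pr}_2^*S_i\otimes \mathrm{pr}_1^*\cO_U(1)$ has Amitsur class $\gamma+(-\gamma)=0$ and is thus $G$-linearized; a telescoping computation gives $\dim V+\sum_i r_i(r_i-r_{i-1}) = r_m n$, so $\dim E_1 = \dim E_2$. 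The $G$-equivariant birational equivalence sends a generic point $(u,[\text{flag}],\{v^{(i)}_j\otimes \mu^{(i)}_j\})$ of $E_2$ to the concatenation of its vectors in $E_1$; the inverse recovers the flag via $W_i=\mathrm{span}(v^{(1)}_*,\ldots,v^{(i)}_*)$. Stable birational invariance of $\Br_{\mathrm{nr}}$ and the No-Name Lemma then yield the claimed isomorphism.

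The second analog (generalizing Lemma \ref{lem.abelianonGr}) asserts: if $A\subseteq G$ is abelian of index $d$ with $d\mid q$, and $\gamma$ has order $d$ in $\rH^2(G)$ with $\mathrm{res}^2_A(\gamma)=0$, then $V^G\ne\emptyset$. I would prove this by induction on $r_m$. The base case $r_m=d$ forces $r_1=\cdots=r_m=d$, hence $m=1$, and the conclusion follows from Lemma \ref{lem.abelianonGr}. For the induction step, Lemma \ref{lem.abelianonGr} itself supplies a $G$-invariant subspace $\Sigma\subset U^\circ$ of dimension $d$, and the Schubert subvariety $\{W_1\supseteq\Sigma\}$ in $V$ is isomorphic to $\mathrm{F}\ell(r_1-d,\ldots,r_m-d;U^\circ/\Sigma)$ (interpreted as a shorter flag when $r_1=d$); the central extension $\widetilde{G}$ descends to $U^\circ/\Sigma$, the numerical hypotheses are inherited, and the induction hypothesis yields a $G$-fixed flag whose pullback to $U^\circ$ produces the desired fixed flag in $V$.

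With these lemmas in hand, the argument concludes exactly as in Theorem \ref{thm.Grn}: for $\alpha\in\Br_{\mathrm{nr}}(k(V)^A)\subseteq \rH^2(A)/\langle \mathrm{res}^2_A(q\gamma)\rangle$, the first analog combined with the vanishing $\Br_{\mathrm{nr}}(k(U)^A)=0$ (from Theorem \ref{thm.proj}) forces $\alpha\in\langle \mathrm{res}^2_A(\gamma)\rangle/\langle \mathrm{res}^2_A(q\gamma)\rangle$; writing $A\cong\Z/e\Z\oplus\Z/f\Z$ and introducing $A''\subseteq A'\subseteq A$ as in the proof of Theorem \ref{thm.Grn} with $d=\gcd(q,s)$, the second analog applied to the index-$d$ pair $A''\subset A'$ gives $V^{A'}\ne\emptyset$, and Lemma \ref{lem.fixedpoint} concludes $\alpha=0$. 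The main obstacle will be verifying the $G$-equivariant birational equivalence $E_1\sim_G E_2$, in particular ensuring that the two $G$-linearizations are intertwined by the concatenation map and that no parasitic twist obstructs the identification.
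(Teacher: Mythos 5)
Your proposal is correct and follows essentially the same route as the paper: the same reduction to showing $\Br_{\mathrm{nr}}(k(V)^A)=0$ for $A\in\mathcal{B}_G$, the same two supporting lemmas (the $U\times V$ stable equivalence via the bundles $\underline{U}^{\circ\oplus r_m}\otimes\cO_U(1)$ and $\bigoplus_i\mathrm{pr}_2^*S_i^{\oplus(r_i-r_{i-1})}\otimes\mathrm{pr}_1^*\cO_U(1)$, and the fixed-point lemma by induction on $r_m$), and the same endgame with $A''\subseteq A'\subseteq A$ and Lemma \ref{lem.fixedpoint}. The only (harmless) deviation is in the induction for the fixed-point lemma, where you quotient by a $d$-dimensional invariant subspace rather than fixing $\Sigma_1$ of dimension $r_1$ and passing to the residual flag variety as the paper does; both reduce $r_m$ and preserve the hypotheses.
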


The proof is similar to the case of Grassmannians (Theorem \ref{thm.Grn}).
We collect the analogous preliminary results.

\begin{lemm}
\label{lem.UtimesFl}
Let the notation be as in Theorem \ref{thm.Fl}.
Then
\[ \Br_{\mathrm{nr}}(k(U)^G)\cong \Br_{\mathrm{nr}}(k(U\times V)^G). \]
\end{lemm}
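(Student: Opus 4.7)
The plan is to mimic Lemma \ref{lem.UtimesV} closely: I would exhibit a pair of canonically $G$-linearized vector bundles, one on $U$ and one on $U\times V$, whose total spaces are $G$-equivariantly birational. The No-Name Lemma together with the stable birational invariance of the unramified Brauer group then identifies $\Br_{\mathrm{nr}}(k(U)^G)$ with $\Br_{\mathrm{nr}}(k(U\times V)^G)$.

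Let $S_1\subset\dots\subset S_m$ denote the tautological subbundles on $V$, of ranks $r_1,\dots,r_m$, and set $r_0=0$. I would take
\[
E=\bigoplus_{i=1}^{m}(\mathrm{pr}_2^*S_i)^{\oplus(r_i-r_{i-1})}\otimes\mathrm{pr}_1^*\cO_U(1)
\]
on $U\times V$ and $E'=\underline{U}^{\circ\oplus r_m}\otimes\cO_U(1)$ on $U$. Each $S_i$ inherits from the inclusion $S_i\hookrightarrow\mathrm{pr}_2^*\underline{U}^\circ$ a canonical $\widetilde G$-linearization, carrying the same $Z$-character as $\underline{U}^\circ$ from Example \ref{exa.linear}; tensoring with $\mathrm{pr}_1^*\cO_U(1)$ cancels the character and yields a canonical $G$-linearization of $E$, and analogously for $E'$. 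A brief computation, using $\dim V=\sum_{i=1}^{m}(n-r_i)(r_i-r_{i-1})$ and $\mathrm{rank}\,E=\sum_{i=1}^{m}r_i(r_i-r_{i-1})$, shows that both total spaces have dimension $(n-1)+nr_m$.

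The $G$-equivariant birational map $E\dashrightarrow E'$ is concatenation: a point of $E$ over $([\ell],[\Sigma_\bullet])$ with block components $(v_i^{(1)},\dots,v_i^{(r_i-r_{i-1})})\in\Sigma_i^{\oplus(r_i-r_{i-1})}\otimes\ell^\vee$ is sent to the $r_m$-tuple in $(U^\circ)^{\oplus r_m}\otimes\ell^\vee$ obtained by listing the blocks in order, each $v_i^{(j)}$ viewed as an element of $U^\circ\otimes\ell^\vee$ through the inclusion $\Sigma_i\hookrightarrow U^\circ$. The inverse rational map sends a generic $r_m$-tuple $(w_1,\dots,w_{r_m})$ to the flag $\Sigma_i=\mathrm{span}(w_1,\dots,w_{r_i})$, partitioning $(w_{r_{i-1}+1},\dots,w_{r_i})$ back into the $i$-th block. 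Both maps are $G$-equivariant because the action is block by block and never mixes blocks of different ranks.

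The conclusion is routine: applying the No-Name Lemma to $E\to U\times V$ and $E'\to U$ exhibits $k(E)^G$ and $k(E')^G$ as purely transcendental extensions of $k(U\times V)^G$ and $k(U)^G$, respectively, while the birational equivalence identifies $k(E)^G$ with $k(E')^G$. The two invariant fields are therefore stably birational and share unramified Brauer groups. The closest thing to an obstacle is verifying that the flag-forming map $(U^\circ)^{r_m}\dashrightarrow V$ is well defined on a dense open subset with the stated generic fiber $\prod_i\Sigma_i^{\oplus(r_i-r_{i-1})}$, but this reduces to a standard linear-algebraic genericity argument (linear independence of the relevant tuples and their images in the successive quotients).
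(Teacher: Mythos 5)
Your proposal is correct and follows essentially the same route as the paper: the same $G$-linearized bundles $\underline{U}^{\circ\oplus r_m}\otimes\cO_U(1)$ on $U$ and $\mathrm{pr}_2^*(S_1^{\oplus r_1}\oplus\dots\oplus S_m^{\oplus r_m-r_{m-1}})\otimes\mathrm{pr}_1^*\cO_U(1)$ on $U\times V$, the same equivariant birational equivalence between their total spaces, and the same conclusion via the No-Name Lemma and stable birational invariance. You merely spell out the concatenation map, its inverse, and the dimension count, which the paper leaves implicit.
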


\begin{proof}
The argument is similar to the case of a Grassmannian (Lemma \ref{lem.UtimesV}), but on $V$ we have $m$ nested tautological bundles
\[ S_1\subset\dots\subset S_m \]
of ranks $r_1<\dots<r_m$.
We have an equivariant birational equivalence
\[
\underline{U}^{\circ{\oplus r_m}}\otimes \cO_U(1)\sim_G \mathrm{pr}_2^*(S_1^{\oplus r_1}\oplus S_2^{\oplus r_2-r_1}\oplus\dots\oplus S_m^{\oplus r_m-r_{m-1}})\otimes \mathrm{pr}_1^*\cO_U(1)
\]
of $G$-linearized bundles and conclude as before.
\end{proof}

\begin{lemm}
\label{lem.abelianonFl}
Let the notation be as in Theorem \ref{thm.Fl} and $A$ an abelian subgroup of $G$ of index $d$.
We suppose that $d$ divides $q$, the order of
$\gamma$ is $d$, and
$\gamma\in \ker(\mathrm{res}_A^2)$.
Then $V^G\ne\emptyset$.
\end{lemm}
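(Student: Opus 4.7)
The plan is to proceed by induction on $m$, with the base case $m=1$ being exactly Lemma \ref{lem.abelianonGr}. This is the natural analogue of the induction on $r$ used there, but adapted to the length of the flag rather than the rank of a single subspace.

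For the inductive step with $m\ge 2$, I would first apply Lemma \ref{lem.abelianonGr} directly to the Grassmannian $\mathrm{Gr}(r_1,U^\circ)$: the hypotheses $d\mid r_1$ (since $d\mid q\mid r_1$), $\gamma$ of order $d$, and $\gamma\in\ker(\mathrm{res}^2_A)$ are satisfied verbatim. This yields a $G$-invariant subspace $\Sigma_1\subset U^\circ$ of dimension $r_1$. The idea is then to reduce to a flag variety of length $m-1$ on the quotient $\bar U:=U^\circ/\Sigma_1$, and apply the inductive hypothesis to the flag variety $\mathrm{F}\ell(r_2-r_1,\dots,r_m-r_1;\bar U)$, whose preimages will give the desired flag $\Sigma_1\subset\Sigma_2\subset\dots\subset\Sigma_m$ in $U^\circ$.

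For this reduction I need to verify that all the hypotheses of the lemma transfer to the quotient. Since the center of $\widetilde G$ acts by scalars on $U^\circ$, the $G$-invariant subspace $\Sigma_1$ is automatically $\widetilde G$-invariant, and $\widetilde G$ acts linearly on $\bar U$ with the \emph{same} central character. Consequently, the Amitsur class of the induced projective action $G\to\mathrm{PGL}(\bar U^\vee)$ is the same element $\gamma\in\rH^2(G)$, of the same order $d$, and still lying in $\ker(\mathrm{res}^2_A)$. For the divisibility, note that $d\mid n$ because $\gamma$ is $n$-torsion by Example \ref{exa.linear}; together with $d\mid r_i$ this gives $d\mid r_i-r_1$ for all $i$, hence $d$ divides $\gcd(r_2-r_1,\dots,r_m-r_1)$, as needed. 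The flag dimensions also remain strictly increasing and bounded by $\dim(\bar U)-1 = n-r_1-1$, since $r_m\le n-1$.

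The main point requiring care is precisely the verification that the Amitsur class on $\PP(\bar U)$ coincides, as an element of $\rH^2(G)$, with the original $\gamma$ rather than with some multiple arising from a smaller ``minimal'' central extension for $\bar U$; this is ensured by using the lift provided by $\widetilde G$ itself. Once this is in hand, the induction closes and a $G$-invariant flag $\bar\Sigma_2/\Sigma_1\subset\dots\subset\bar\Sigma_m/\Sigma_1$ in $\bar U$ lifts to a $G$-fixed point of $V$.
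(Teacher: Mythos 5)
Your proposal is correct and follows essentially the same route as the paper: the paper also applies Lemma \ref{lem.abelianonGr} to produce a $G$-invariant $\Sigma_1$ of dimension $r_1$ and then invokes the induction hypothesis on the Schubert variety of flags with first step $\Sigma_1$, which is exactly your flag variety $\mathrm{F}\ell(r_2-r_1,\dots,r_m-r_1;U^\circ/\Sigma_1)$ (the paper inducts on $r_m$ rather than $m$, a cosmetic difference). Your explicit verification that the Amitsur class, its order, and the divisibility hypotheses persist on the quotient is a useful elaboration of what the paper leaves implicit.
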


\begin{proof}
We prove the result by induction on $r_m$.
By Lemma \ref{lem.abelianonGr} there exists $[\Sigma]\in \mathrm{Gr}(r_1,U^\circ)^G$.
We conclude by applying the induction hypothesis to the Schubert variety of $\Sigma_1\subset\dots\subset \Sigma_m$ with $\Sigma_1=\Sigma$.
\end{proof}

\begin{proof}[Proof of Theorem \ref{thm.Fl}]
The argument is just as in the proof of Theorem \ref{thm.Grn}.
To establish the claim, that $\Br_{\mathrm{nr}}(k(V)^A)=0$ for $A\in \mathcal{B}_G$,
we consider $\mathrm{res}^2_A(\gamma)$, whose order we denote by $s$, so the quotient group
$\langle \mathrm{res}^2_A(\gamma)\rangle/\langle\mathrm{res}^2_A(q\gamma)\rangle$ has order $d=\gcd(q,s)$;
we only need to consider elements of this quotient group, by Lemma \ref{lem.UtimesFl}.
Subgroups $A''\subseteq A'\subseteq A$ are defined just as before, and we conclude with Lemmas \ref{lem.abelianonFl} and \ref{lem.fixedpoint}.
\end{proof}

\begin{rema}
\label{rem.stablylinear}
Here, and also in the case of Grassmannians (Section \ref{sect:grass}), in case of a projectively linear action with $\gamma=0$, i.e., coming from a linear action, the action of $G$ on $V$ is stably linearizable.
We apply the construction of the proof of Lemma \ref{lem.UtimesFl}, respectively Lemma \ref{lem.UtimesV}, just without the factor $U$ and twist by $\cO_U(1)$.
\end{rema}

\subsection*{Action involving correlations}
Suppose $r_1$, $\dots$, $r_m$ satisfy the symmetry condition and the action of $G$ on $V$ involves correlations.
An index $2$ subgroup $G'$ acts by collineations with an associated class $\gamma\in \rH^2(G')$.

Let $q=\gcd(r_1,\dots,r_{[m/2]})$.
If $m$ is odd, then $n=2r_{(m+1)/2}$, and as in Section \ref{sect:grass} we have $\beta\in \rH^2(G)$, associated with the projective linear action on Pl\"ucker coordinates $G\to \PGL(\bigwedge^{r_{(m+1)/2}}U^{\circ\vee})$.
We have
\[
\Am(V,G)=\begin{cases}
\langle \mathrm{cores}_{G'}^2(q \gamma)\rangle, & \text{if $m$ is even}, \\
\langle \beta,\mathrm{cores}_{G'}^2(q \gamma)\rangle, & \text{if $m$ is odd},
\end{cases}
\]
where $\mathrm{cores}^2_{G'}\colon \rH^2(G')\to \rH^2(G)$ is the corestriction map.
This comes by applying Lemma \ref{lem.PicVW} to the product of Grassmannians $\mathrm{Gr}(r_i,U^\circ)$.
For $i=1$, $\dots$, $[m/2]$ the projective representation associated with the $G$-action on $\mathrm{Gr}(r_i,U^\circ)\times \mathrm{Gr}(r_{m+1-i},U^\circ)$ is obtained from $G'\to \PGL(U^{\circ\vee})$ by two operations.
The first, $\bigwedge^{r_i}$, multiplies the associated class by $r_i$.
The second, leading to the corestriction, is tensor induction
\cite[\S 2B]{berger}.

\begin{theo}
\label{thm.Flcorrelation}
Let a faithful action of a finite group $G$ on a flag variety $V=\mathrm{F}\ell(r_1,\dots,r_m;U^\circ)$ be given, with $m\ge 2$.
Suppose that the action of $G$ involves correlations, with index $2$ subgroup $G'$ acting by collineations leading to $\gamma\in \rH^2(G')$.
Let $\beta$ be the class associated with the projective linear action on Pl\"ucker coordinates $G\to \PGL(\bigwedge^{r_{(m+1)/2}} U^{\circ\vee})$ when $m$ is odd, $0$ when $m$ is even.
Set $q=\gcd(r_1,\dots,r_{[m/2]})$.
Then
\begin{align*}
\Br_{\mathrm{nr}}(k(V)^G&)\cong \ker
\Big(\rH^2(G)/\langle \beta,\mathrm{cores}_{G'}^2(q \gamma)\rangle \\
&\qquad\qquad\to \bigoplus_{A\in \mathcal{B}_G}\rH^2(A)/\langle \mathrm{res}^2_A(\beta),\mathrm{res}^2_A(\mathrm{cores}_{G'}^2(q \gamma))\rangle\Big).
\end{align*}
\end{theo}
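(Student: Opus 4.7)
The plan is to follow the pattern of Theorem \ref{thm.Grngeneral}, combined with the setup of Theorem \ref{thm.Fl}.  The identification $\Br([V/G])\cong \rH^2(G)/\langle\beta,\mathrm{cores}_{G'}^2(q\gamma)\rangle$ underlying the theorem's right-hand side is the content of the discussion immediately preceding the statement: apply Lemma \ref{lem.PicVW} to the natural $G$-equivariant morphism $V\to \prod_i \mathrm{Gr}(r_i,U^\circ)$, and recognize that on each correlated pair $\mathrm{Gr}(r_i,U^\circ)\times \mathrm{Gr}(r_{m+1-i},U^\circ)$ the projective representation comes from $G'\to \PGL(U^{\circ\vee})$ by first taking $\bigwedge^{r_i}$ and then tensor-inducing up to $G$, so its Amitsur class is $\mathrm{cores}_{G'}^2(r_i\gamma)$; combining across $i\le [m/2]$ and with the Pl\"ucker class $\beta$ in the odd middle position yields the stated Amitsur subgroup.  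The reverse containment in the theorem is immediate from \eqref{eqn.bicyclicnr}, so the content lies in the forwards direction: for every $A\in \mathcal{B}_G$ we need $\Br_{\mathrm{nr}}(k(V)^A)=0$.

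When $A\subseteq G'$, the $A$-action on $V$ is purely through collineations, and Theorem \ref{thm.Fl} applied to this $A$-action yields $\Br_{\mathrm{nr}}(k(V)^A)=0$ directly (the relevant kernel is trivial since $A$ is among its own bicyclic subgroups).  When $A$ involves correlations, let $A'=A\cap G'$, an index-$2$ collineation subgroup; as a subgroup of the bicyclic $A$ it is itself bicyclic, so Theorem \ref{thm.Fl} again gives $\Br_{\mathrm{nr}}(k(V)^{A'})=0$.  Any $\alpha\in \Br_{\mathrm{nr}}(k(V)^A)$ then has vanishing image in $\Br([V/A'])$, and the argument of Theorem \ref{thm.Grngeneral} restricts $\alpha$ to a $2$-torsion subgroup controlled by the Pl\"ucker class on $A$.

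To conclude $\alpha=0$ via Lemma \ref{lem.fixedpoint}, one needs a fixed point $V^A\ne\emptyset$; this is the step I expect to be the most substantial, requiring a flag-variety analog of Lemma \ref{lem.correlation} established by induction on $m$.  For $m$ odd the middle Grassmannian $\mathrm{Gr}(r_{(m+1)/2},U^\circ)$ lies in the self-dual range ($n=2r_{(m+1)/2}$), and Lemma \ref{lem.correlation} produces a self-dual $[\Sigma]\in \mathrm{Gr}(r_{(m+1)/2},U^\circ)^A$; the Schubert subvariety of $A$-flags refining $\Sigma$ is then (via the correlation pairing) a flag variety of shorter length on $\Sigma$ with the restricted correlation, to which the induction hypothesis applies.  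For $m$ even one first chooses an isotropic $[\Sigma]\in \mathrm{Gr}(r_{m/2},U^\circ)^A$ via Lemma \ref{lem.correlation} applied to $\mathrm{Gr}(r_{m/2},U^\circ)\hookrightarrow \mathrm{Gr}(r_{m/2},U^\circ)\times \mathrm{Gr}(r_{m/2+1},U^\circ)$ (with $\Sigma_{m/2+1}=\Sigma^\perp$), then proceeds inductively on the Schubert subvariety of flags pinned to $\Sigma\subset \Sigma^\perp$.

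The main technical obstacle is the inductive bookkeeping: one must check that the residual action of $A$ on the Schubert subvariety remains bicyclic and of the required collineation-plus-correlation type, that the Pl\"ucker and corestriction classes controlling the vanishing hypothesis track correctly down the induction, and (in the even case) that the isotropic choice at the middle step is compatible with the existing collineation constraints --- the skew-symmetric form analysis at the end of Lemma \ref{lem.correlation} will need to be adapted to recognize that the diagonalizable matrix $C$ preserves not just eigenvectors but also chains of nested invariant subspaces.  Modulo this bookkeeping, Lemma \ref{lem.fixedpoint} then closes out the forwards containment, and the theorem follows as in Theorems \ref{thm.Grngeneral} and \ref{thm.Fl}.
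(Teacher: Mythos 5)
Your skeleton matches the paper's: the Amitsur computation gives the quotient $\rH^2(G)/\langle\beta,\cores_{G'}^2(q\gamma)\rangle$, the reverse containment is \eqref{eqn.bicyclicnr}, and the forward containment reduces to $\Br_{\mathrm{nr}}(k(V)^A)=0$ for $A\in\cB_G$, split according to whether $A\subseteq G'$ (Theorem \ref{thm.Fl}) or $A$ involves correlations (Lemma \ref{lem.fixedpoint} plus a fixed-point statement). But the fixed-point statement is the entire substance of the remaining case, and you leave it open (``modulo this bookkeeping''), with a sketch that would not go through as written. Two concrete problems. First, in the odd case your induction passes to ``a flag variety of shorter length on $\Sigma$ with the restricted correlation'' --- but $\Sigma$ is Lagrangian for $\varphi$ (that is what $[\Sigma]\in V^{C_{[\varphi]}}$ means), so $\varphi|_\Sigma=0$ and there is no restricted correlation; the correlation acts on short flags inside $\Sigma$ as the collineation $L_{[\varphi^{-1\vee}\circ\varphi]}$, and since (as in the proof of Lemma \ref{lem.correlation}, via \eqref{eqn.commute4}) the collineation subgroup $A'$ lifts to a \emph{linear} action, no induction is needed at all: an abelian group acting linearly fixes a full flag, and its $\varphi$-perps complete an $A$-invariant flag. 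This is exactly the paper's reduction (``Since $A'$ acts linearly, it suffices to show that $\mathrm{Gr}(r_{(m+1)/2},U^\circ)^A\ne\emptyset$ \dots'').

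Second, and more seriously, in the even case you invoke Lemma \ref{lem.correlation} to produce an isotropic $[\Sigma]\in\mathrm{Gr}(r_{m/2},U^\circ)$, but that lemma is stated and proved only for the middle-dimensional Grassmannian $\mathrm{Gr}(r,2r)$, whereas here $r_{m/2}<n/2$. The genuinely new input --- which is what the paper's proof supplies --- is a rerun of the $B_-=0$ analysis for the non-maximal isotropic Grassmannian: its fixed locus under the correlation is a \emph{single} orbit of the orthogonal group (one copy of an orthogonal Grassmannian, not two components), so a fixed point exists unconditionally, consistent with $\beta=0$ for $m$ even. You correctly anticipate that ``the skew-symmetric form analysis \dots will need to be adapted,'' but without identifying this connectivity point the argument does not close. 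So the proposal is the right strategy with a genuine gap precisely at its load-bearing step.
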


\begin{proof}
We argue as in the proof of Theorem \ref{thm.Grngeneral}.
For $A\in \mathcal{B}_G$, we show $\Br_{\mathrm{nr}}(k(V)^A)=0$.
This is known (proof of Theorem \ref{thm.Fl}) when $A\subset G'$, so we suppose this is not the case.
Following the proof of Lemma \ref{lem.correlation}, we have the index $2$ subgroup $A'=A\cap G'$, whose action lifts to a linear action.
We are done, provided we can show $\mathrm{res}^2_A(\beta)=0$ implies $V^A\ne \emptyset$.

We suppose $\mathrm{res}^2_A(\beta)=0$.
Since $A'$ acts linearly, it suffices to show that $\mathrm{Gr}(r_{(m+1)/2},U^\circ)^A\ne \emptyset$ when $m$ is odd, respectively $\mathrm{Gr}(r_{m/2},U^\circ)^{A'}$ contains
a point $[\Sigma]$ sent by the correlations in $A$ to a point
\[ [\Sigma']\in \mathrm{Gr}(r_{\frac{m}{2}+1},U^\circ)\qquad\text{with}\qquad\Sigma\subset \Sigma' \]
when $m$ is even.
The argument is as in the proof of
Lemma \ref{lem.correlation}, exactly so when $m$ is odd, differing slightly in the treatment of the last case when $m$ is even.
When $B_-=0$ (notation of the proof of Lemma \ref{lem.correlation}), the fixed locus of $\mathrm{Gr}(r_{m/2},U^\circ)$ ($m$ even) for the correlation is a single copy of an orthogonal Grassmannian, thus has a fixed point.
\end{proof}

\section{General approach via destackification}
\label{sect:destack}
Let $\cX=[V/G]$ be given, where $V$ is a smooth projective rational variety and $G$ acts generically freely.
We suppose that $\Br(\cX)$ has been determined, as outlined in Section \ref{sect:stack}, in particular, an element of $\Br(\cX)$ is given by an element of $\rH^2(G,k(V)^\times)$.
Here we describe a procedure to decide whether a given element of $\Br(\cX)$ lies in $\Br_{\mathrm{nr}}(k(V)^G)$.

\subsection*{Root stacks}
Let $\cX$ be a smooth DM stack and
$\cD$ a divisor on $\cX$.
For a positive integer $r$ there is the \emph{root stack}
\[ \sqrt[r]{(\cX,\cD)} 
\]
of \cite[\S 2]{cadman}, \cite[App.\ B]{AGV}, which is again smooth, provided $\cD$ is smooth.
The root stack has the same set of $k$-points and the same coarse moduli space as $\cX$, but has stabilizer groups extended by $\mu_r$ along $\cD$.

The \emph{iterated root stack} along a simple normal crossing divisor
$\cD=\cD_1\cup\dots\cup\cD_{\ell}$ on $\cX$
\cite[Defn.\ 2.2.4]{cadman} is determined by an
$\ell$-tuple of positive integers
$\mathbf{r}=(r_1,\dots,r_{\ell})$.
This stack $\sqrt[\mathbf{r}]{(\cX,\cD)}$ is obtained by iteratively performing the $r_i$th root stack construction along each divisor $\cD_i$.

An in-depth treatment of the birational geometry of DM stacks,
including background on topics such as root stacks, is given in \cite{KT-stacks}.

\subsection*{Set-up}
To start, we replace $\cX=[V/G]$ by a smooth DM stack $\cX'$ with smooth coarse moduli space and proper birational morphism to $\cX$.

This is achieved via functorial destackification \cite{bergh}, \cite{berghrydh}.
The outcome is a sequence of stacky blow-ups whose composite $\cX'\to \cX$ is as desired.
Here, a stacky blow-up is either a usual blow-up along a smooth center or a root stack operation along a smooth divisor.
The coarse moduli space $X'$ of $\cX'$ is a smooth projective variety with a simple normal crossing divisor $D=D_1\cup\dots\cup D_\ell$ on $X'$, such that $\cX'\cong \sqrt[\mathbf{r}]{(X',D)}$ is an iterated root stack of $D$.

The morphism $\cX'\to \cX$ is not necessarily representable.
Indeed, a (nontrivial) root stack operation adds stabilizers along a divisor.
The corresponding \emph{relative} coarse moduli space is a stack $\mathsf{X}'$ with representable morphism to $\cX$.
Since $\cX$ has a representable morphism to $BG$, so does $\mathsf{X}'$, i.e.,
$\mathsf{X}'\cong [V'/G]$ for some projective variety $V'$.
The variety $V'$ is normal, but not necessarily smooth.
We have the diagram
\[ \xymatrix{\cX' \ar[r] \ar[dr] & [V'/G]\ar[d] \ar[r] & X' \\
& \cX} \]
with $2$-commutative triangle.
The vertical morphism is representable, induced by a $G$-equivariant birational proper morphism $V'\to V$.

Let $M=k(V)$.
Suppose we are given $\beta\in \rH^2(G,M^\times)$, representing $\alpha\in \Br([V/G])$.
We explain how to check whether $\alpha$ has vanishing residue along a divisor of $X'$.
It is only necessary to check this for the finitely many divisors of $X'$, where $\cX'$ has nontrivial generic stabilizer.
We have $\alpha\in \Br_{\mathrm{nr}}(M^G)$ if and only if these residues vanish.

Let $D'\subset X'$ be such a divisor, and let $D$ be a divisor in $V'$, mapping to $D'$ in $X'$.
We let $Z$ denote the stabilizer and $I$ the inertia of $D$, so $I$ is cyclic and central in $Z$.
The induced action of $\overline{Z}=Z/I$ on $D$ is faithful, and we have
$k(D)^{\overline{Z}}\cong k(D')$. Let $n=|I|$.

By the standard behavior of residue under extensions \cite[Thm.\ 10.4]{saltmanlectures}, the residue of $\alpha$ along $D'$ in $X'$ is equal to the residue of the restriction of $\alpha$ to $\Br(M^Z)$ along $D/Z$ in $V'/Z$.

We introduce notation for DVRs, fraction fields, and residue fields:
\begin{itemize}
\item $V'/Z$: 
The local ring of $V'/Z$ at the
generic point of $D/Z$ will be denoted by $R$;
fraction field $K=M^Z$, residue field $\kappa=k(D)^{\overline{Z}}$.
\item $V'/I$: 
The local ring of $V'/I$ at the generic point of $D$ will be denoted by $S$; fraction field $L=M^I$, residue field $\lambda=k(D)$.
\item $V'$:
The local ring of $V'$ at the generic point of $D$ will be denoted by $T$; 
fraction field $M$, residue field $\lambda$.
\end{itemize}
The respective maximal ideals will be denoted by $\mathfrak{m}_R$, etc.

\subsection*{Residue I}
Certainly, a necessary condition for the vanishing of the residue of $\alpha$ along $D'$ is the vanishing of the residue of the restriction of $\alpha$ to $\Br(L)$ along $D$.
We explain the computation of this residue.
The restriction of $\alpha$ is represented by
\[ \beta|_I\in \rH^2(I,M^\times)\cong L^\times/\rN_{M/L}(M^\times)=S^\times/\rN_{M/L}(T^\times). \]
Let $v\in S^\times$ be a representative of $\beta|_I$.
Then the residue of the restriction of $\alpha$ to $\Br(L)$ along $D$ is
\[ [\bar v]\in \lambda^\times/\lambda^{\times n}. \]

If $[\bar v]\ne 0$, then we have detected a nontrivial residue of $\alpha$, and we stop the computation.

\subsection*{Reduction to cocycle for $\overline{Z}$}
Continuing with the above notation, we suppose $[\bar v]=0$.
By making a suitable choice of representative $v$ we may suppose that
\[ v\in 1+\mathfrak{m}_S. \]
We let
$E\subset 1+\mathfrak{m}_S$ denote the subgroup generated by $(1+\mathfrak{m}_B)^n$ and the Galois orbit of $v$.
We define
$L'=L(E^{1/n})$ and $M'=L'M$; these are Kummer extensions of $L$.
We now show that, there is a Kummer extension $K'/K$ with $K'L=L'$ and $[K':K]=[L':L]$.

A choice of maximal ideal of the integral closure of $S$ in $L'$ determines, by localization, a DVR $S'$ with residue field $\lambda$.
The Kummer pairing of $\Gal(L'/L)$ with $E$
extends to a pairing
\[ \Gal(L'/K)\times E\to \mu_n. \]
The induced homomorphism $\Gal(L'/K)\to \Hom(E,\mu_n)\cong \Gal(L'/L)$
determines a direct product decomposition
\[ \Gal(L'/K)\cong \Gal(L'/L)\times \overline{Z} \]
and thus a Kummer extension
\[ K'=L'^{\overline{Z}} \]
of $K$ with $K'L=L'$.
The corresponding DVR $R'$ has residue field $\kappa$.

If we replace the tower of fields $M/L/K$ by $M'/L'/K'$
and pass from $\beta|_Z\in \rH^2(Z,M^\times)$ to
$\beta'\in \rH^2(Z,{M'}^\times)$, the residue does not change, and we have $v\in({L'}^\times)^n$.
So
\[ \beta'\in \ker\big(\rH^2(Z,{M'}^\times)\to \rH^2(I,{M'}^\times)\big). \]

\subsection*{Residue II}
We keep the above notation but revert to the notation $M/L/K$ for the tower of fields.
So we have reduced to the case
\[ \beta|_Z\in \ker\big(\rH^2(Z,M^\times)\to \rH^2(I,M^\times)\big). \]
Then, by the Hochschild-Serre spectral sequence and Hilbert's Theorem 90, $\beta|_Z$ is the image, under the inflation map, of some
\[ \gamma\in \rH^2(\overline{Z},L^\times). \]

Since the $\overline{Z}$-Galois extension $L/K$ is associated with a unramified extension of DVRs,
the residue is determined by the procedure described in \cite[\S III.2]{GB}.
We apply the valuation
\[ \mathrm{val}\colon L^\times\to \Z \]
to obtain
$\mathrm{val}(\gamma)\in \rH^2(\overline{Z},\Z)$.
Now the residue is the class associated with $\mathrm{val}(\gamma)$ under the isomorphism
\[ \Hom(\overline{Z},\Q/\Z)=\rH^1(\overline{Z},\Q/\Z)\cong \rH^2(\overline{Z},\Z). \]

\begin{exam}
\label{exa.destaP3modK4}
For the quotient stack $[\bP^3/\fK_4]$ of Example \ref{exam:running}, with
Brauer group of order $2$ generated by $\alpha$,
destackification is achieved by
\begin{itemize}
\item blowing up the fixed points to produce exceptional divisors $E_i$ ($i\in \{0,\dots,3\}$),
\item blowing up the proper transforms of the intersections of pairs of coordinate hyperplanes to yield exceptional divisors $E_{ij}$ ($i$, $j\in \{0,\dots,3\}$, $i<j$), and
\item blowing up the intersections of the proper transforms of the exceptional divisors from the first blow-up with the proper transforms of the coordinate hyperplanes, leading to exceptional divisors $E'_{cd}$ ($c$, $d\in \{0,\dots,3\}$, $c\ne d$).
\end{itemize}
As indicated in \cite[Rem.\ 3.3]{oesinghausconic}, since only $\Z/2\Z$ and $\fK_4$ occur as stabilizer groups, destackification is achieved with just ordinary blow-ups (no nontrivial root stack operations).
So $\cX'=[V'/\fK_4]$.
Along the divisors $E_{ij}$ and $E'_{cd}$ the generic stabilizer has order $2$.
Let $D\subset V'$, over $D'\subset X'$, be one of the divisors with nontrivial generic stabilizer.
In local coordinates $x$, $y$, $z$, we have $D$ given by $x=0$, where $\fK_4$ acts by distinct nontrivial characters on $x$ and $y$ and acts trivially on $z$.
We have $|I|=2$ and
$\beta\in \rH^2(\fK_4,k(x,y,z)^\times)$, 
given by a $\mu_2$-valued cocycle and
image under the inflation map of
$[x^2]\in \rH^2(\fK_4/I,k(x^2,y,z)^\times)$ (with the conventions of Section \ref{sect:gen} for cyclic group cohomology).
The residue is given by the nontrivial homomorphism $\fK_4/I\to \Q/\Z$.
\end{exam}

\begin{exam}
\label{exa.M06}
Consider the action of
\[ G=\mathfrak{A}_4\cong \langle(135)(246),(12)(34),(12)(56)\rangle\subset \fS_6 \]
on $V=\overline{\cM}_{0,6}$.
This is a \emph{nonstandard} $\mathfrak{A}_4$ in $\fS_6$, \emph{not} fixing a plane
in the Segre cubic model.
Actions fixing a plane, such as the Klein $4$-group $\fK_4\subset G$, are birational to actions on toric varieties, see \cite[Section 6]{CTZ}.
Restriction to the Klein $4$-group induces an isomorphism
$$
\rH^2(G)\cong \rH^2(\fK_4)\cong\bZ/2\bZ.
$$
As well, $V^G$ is nonempty, with
$$
\rH^1(G, \Pic(V))\cong \rH^1(\fK_4, \Pic(V))\cong \bZ/2\bZ.
$$
So
$$
\Br([V/G])\cong \Br([V/\fK_4])\cong \bZ/2\bZ\oplus \bZ/2\bZ.
$$
It is known that $\Br_{\mathrm{nr}}(k(V)^{\fK_4})=0$ (since the $\fK_4$-action is birational to a toric action, and the rationality of such a quotient is a special case of 
\cite[Thm. 1.2 and 1.3]{KP}); 
consequently, 
\[ \Br_{\mathrm{nr}}(k(V)^G)=0. \]
\end{exam}

\bibliographystyle{plain}
\bibliography{brauerquot}

\begin{thebibliography}{10}

\bibitem{AGV}
D.~Abramovich, T.~Graber, and A.~Vistoli.
\newblock Gromov-{W}itten theory of {D}eligne-{M}umford stacks.
\newblock {\em Amer. J. Math.}, 130(5):1337--1398, 2008.

\bibitem{Barge}
J.~Barge.
\newblock Cohomologie des groupes et corps d'invariants multiplicatifs.
\newblock {\em Math. Ann.}, 283(3):519--528, 1989.

\bibitem{berger}
T.~R. Berger.
\newblock Hall-{H}igman type theorems. {V}.
\newblock {\em Pacific J. Math.}, 73(1):1--62, 1977.

\bibitem{bergh}
D.~Bergh.
\newblock Functorial destackification of tame stacks with abelian stabilisers.
\newblock {\em Compos. Math.}, 153(6):1257--1315, 2017.

\bibitem{berghrydh}
D.~Bergh and D.~Rydh.
\newblock Functorial destackification and weak factorization of orbifolds,
  2019.
\newblock {\tt arXiv:1905.00872}.

\bibitem{blanc2018finite}
J.~Blanc, I.~Cheltsov, A.~Duncan, and Yu. Prokhorov.
\newblock Finite quasisimple groups acting on rationally connected threefolds.
\newblock {\em Math. Proc. Cambridge Philos. Soc.}, 174(3):531--568, 2023.

\bibitem{Bog-linear}
F.~A. Bogomolov.
\newblock The {B}rauer group of quotient spaces of linear representations.
\newblock {\em Izv. Akad. Nauk SSSR Ser. Mat.}, 51(3):485--516, 688, 1987.

\bibitem{brown}
K.~S. Brown.
\newblock {\em Cohomology of groups}, volume~87 of {\em Graduate Texts in
  Mathematics}.
\newblock Springer-Verlag, New York, 1994.
\newblock Corrected reprint of the 1982 original.

\bibitem{cadman}
C.~Cadman.
\newblock Using stacks to impose tangency conditions on curves.
\newblock {\em Amer. J. Math.}, 129(2):405--427, 2007.

\bibitem{CTZ}
I.~Cheltsov, Yu. Tschinkel, and Zh. Zhang.
\newblock Equivariant geometry of the {S}egre cubic and the {B}urkhardt
  quartic, 2023.
\newblock {\tt arXiv:2308.15271}.

\bibitem{ChGR}
V.~Chernousov, P.~Gille, and Z.~Reichstein.
\newblock Resolving {$G$}-torsors by abelian base extensions.
\newblock {\em J. Algebra}, 296(2):561--581, 2006.

\bibitem{chowhomogeneous}
W.-L. Chow.
\newblock On the geometry of algebraic homogeneous spaces.
\newblock {\em Ann. of Math. (2)}, 50:32--67, 1949.

\bibitem{CT-SL}
J.-L. Colliot-Th\'{e}l\`ene.
\newblock Groupe de {B}rauer non ramifi\'{e} de quotients par un groupe fini.
\newblock {\em Proc. Amer. Math. Soc.}, 142(5):1457--1469, 2014.

\bibitem{CTHS}
J.-L. Colliot-Th\'{e}l\`ene, D.~Harari, and A.~N. Skorobogatov.
\newblock Compactification \'{e}quivariante d'un tore (d'apr\`es {B}rylinski et
  {K}\"{u}nnemann).
\newblock {\em Expo. Math.}, 23(2):161--170, 2005.

\bibitem{CTS}
J.-L. Colliot-Th\'{e}l\`ene and J.-J. Sansuc.
\newblock The rationality problem for fields of invariants under linear
  algebraic groups (with special regards to the {B}rauer group).
\newblock In {\em Algebraic groups and homogeneous spaces}, volume~19 of {\em
  Tata Inst. Fund. Res. Stud. Math.}, pages 113--186. Tata Inst. Fund. Res.,
  Mumbai, 2007.

\bibitem{Demazure}
M.~Demazure.
\newblock Automorphismes et d\'{e}formations des vari\'{e}t\'{e}s de {B}orel.
\newblock {\em Invent. Math.}, 39(2):179--186, 1977.

\bibitem{fischer}
E.~Fischer.
\newblock Die {Isomorphie} der {Invariantenk{\"o}rper} der endlichen
  {Abel'schen} {Gruppen} linearer {Transformationen}.
\newblock {\em Nachr. Ges. Wiss. G{\"o}ttingen, Math.-Phys. Kl.}, 1915:77--80,
  1915.

\bibitem{gantmacher}
F.~R. Gantmacher.
\newblock {\em The theory of matrices. {V}ols. 1, 2}.
\newblock Chelsea Publishing Co., New York, 1959.
\newblock Translated by K. A. Hirsch.

\bibitem{GB}
A.~Grothendieck.
\newblock Le groupe de {B}rauer. {I}--{III}.
\newblock In {\em Dix expos\'es sur la cohomologie des sch\'emas}, volume~3 of
  {\em Adv. Stud. Pure Math.}, pages 46--188. North-Holland, Amsterdam, 1968.

\bibitem{KP}
M.~Kang and Yu.~G. Prokhorov.
\newblock Rationality of three-dimensional quotients by monomial.
\newblock {\em J. Algebra}, 324(9):2166--2197, 2010.

\bibitem{KT-effect}
A.~Kresch and Yu. Tschinkel.
\newblock Effectivity of {B}rauer-{M}anin obstructions.
\newblock {\em Adv. Math.}, 218(1):1--27, 2008.

\bibitem{bssurf}
A.~Kresch and Yu. Tschinkel.
\newblock Models of {Brauer}-{Severi} surface bundles.
\newblock {\em Mosc. Math. J.}, 19(3):549--595, 2019.

\bibitem{KT-dp}
A.~Kresch and Yu. Tschinkel.
\newblock Cohomology of finite subgroups of the plane {C}remona group, 2022.
\newblock {\tt arXiv:2203.01876}, to appear in Algebraic Geom. and Physics.

\bibitem{KT-stacks}
A.~Kresch and Yu. Tschinkel.
\newblock Birational geometry of {D}eligne-{M}umford stacks, 2023.
\newblock {\tt arXiv:2312.14061}.

\bibitem{oesinghausconic}
J.~Oesinghaus.
\newblock Conic bundles and iterated root stacks.
\newblock {\em Eur. J. Math.}, 5(2):518--527, 2019.

\bibitem{saltmanbrauer}
D.~J. Saltman.
\newblock Multiplicative field invariants and the {B}rauer group.
\newblock {\em J. Algebra}, 133(2):533--544, 1990.

\bibitem{saltmanlectures}
D.~J. Saltman.
\newblock {\em Lectures on division algebras}, volume~94 of {\em CBMS Regional
  Conference Series in Mathematics}.
\newblock American Mathematical Society, Providence, RI; on behalf of
  Conference Board of the Mathematical Sciences, Washington, DC, 1999.

\bibitem{villalobospaz}
D.~E. Villalobos~Paz.
\newblock Rational curves on algebraic spaces and projectivity criteria, 2022.
\newblock Ph.D. thesis, Princeton University.

\end{thebibliography}

\end{document}